\documentclass{article}

\RequirePackage{amsthm,amsmath,amsfonts,amssymb}
\RequirePackage[authoryear]{natbib}
\RequirePackage[colorlinks,citecolor=blue,urlcolor=blue,unicode=true,linkcolor=blue]{hyperref}
\RequirePackage{hyperref}
\usepackage{enumitem}
\usepackage{authblk}

\theoremstyle{plain}
\newtheorem{theorem}{Theorem}
\newtheorem{lemma}{Lemma}
\newtheorem{proposition}{Proposition}
\newtheorem{corollary}{Corollary}
\theoremstyle{definition}

\newcommand{\xdom}{\mathcal{X}}
\newcommand{\R}{\mathbb{R}}
\newcommand{\N}{\mathbb{N}}
\newcommand{\E}{\mathbf{E}}
\newcommand{\Nl}{\mathcal{N}}
\newcommand{\x}{x}
\newcommand{\xt}[1][t]{{\x}_{#1}}

\newcommand{\vx}{X}
\newcommand{\vxt}[1][t]{\vx_{#1}}
\newcommand{\vxtt}{\vxt[t+1]}

\newcommand{\st}[1][t]{\sigma_{#1}}
\newcommand{\stt}{\st[t+1]}
\newcommand{\C}{C}
\newcommand{\Ct}[1][t]{\C_{#1}}
\newcommand{\Ctt}{\Ct[t+1]}
\newcommand{\Ctilde}[1][t]{\tilde{\C}_{#1}}

\newcommand{\Id}[1][\dim]{\mathbf{Id}_{#1}}

\newcommand{\pc}[1][t]{p_{#1}}
\newcommand{\pcc}{\pc[t+1]}
\newcommand{\ptarget}{p_{\textrm{target}}}
\newcommand{\fac}{\beta}

\renewcommand{\dim}{n}

\newcommand{\alg}[1][]{\mathcal{A}^{#1}}
\newcommand{\algt}{\alg[t]}
\newcommand{\dirac}[1]{\delta_{#1}}
\newcommand{\paramt}[1][]{\gamma_{#1}}

\newcommand{\vparamt}{{\boldsymbol{\paramt[]}}}
\newcommand{\vparamtt}[1][t]{\vparamt_{#1}}
\newcommand{\vparamttt}{\vparamtt[t+1]}
\newcommand{\paramk}[1][]{\theta_{#1}}
\newcommand{\vparamk}{{\boldsymbol{\paramk[]}}}
\newcommand{\vparamkt}[1][t]{\vparamk_{#1}}
\newcommand{\vparamktt}{\vparamkt[t+1]}
\newcommand{\paramkt}[1][t]{\paramk[#1]}

\newcommand{\paramu}[1][]{\psi_{#1}}
\newcommand{\paramut}[1][t]{\paramu[#1]}
\newcommand{\vparamu}{{\boldsymbol{\paramu[]}}}
\newcommand{\vparamut}[1][t]{{{\vparamu}_{#1}}}
\newcommand{\ptspace}{\Gamma}
\newcommand{\pkspace}{\Theta}
\newcommand{\puspace}{\Psi}

\newcommand{\xsalgebra}{\Sigma}
\newcommand{\ind}{\mathbf{1}}
\newcommand{\cspace}[1][\dim]{SD_{#1\times #1}}

\newcommand{\strong}[1]{\textit{#1}}

\newcommand{\qp}[1][]{q_{\paramk[#1]}}

\newcommand{\Qp}[1][]{Q_{\paramk[#1]}}
\newcommand{\Qpt}[1][t]{Q_{\vparamkt[#1]}}
\newcommand{\pacct}{\alpha_{\paramk}(x, y)}
\newcommand{\ksalg}{\Sigma_\pkspace}
\newcommand{\usalg}{\Sigma_\puspace}
\newcommand{\dx}{d_\xdom}
\newcommand{\dk}{d_\pkspace}
\newcommand{\xmeasure}{\lambda}
\newcommand{\B}[1][\cdot]{B_{\paramk{}}(x, #1)}
\newcommand{\Bt}[1][\cdot]{B^t_{\paramk{}}(x, #1)}
\newcommand{\ack}[1][\paramk]{\tilde{P}_{#1}}
\newcommand{\ackern}[1][\cdot]{\ack(x, #1)}
\newcommand{\ackernt}[1][\cdot]{\ack^t(x, #1)}
\newcommand{\acp}[1][\paramk]{\tilde{p}_{#1}}
\newcommand{\acpt}[1][\paramk]{\tilde{p}^t_{#1}}

\newcommand{\chaint}{(\vxt{}, \vparamkt{}, \vparamut{})_{t\in \N}}
\newcommand{\initchain}{(\xt[0], \paramkt[0], \paramut[0])}
\newcommand{\ttheta}{{\tilde{\theta}}}

\newcommand{\vyt}[1][t]{Y_{#1}}

\newcommand{\tgamma}[1][]{{\tilde{\gamma_{#1}}}}
\newcommand{\Cx}{C_\xdom}
\newcommand{\Ctheta}{C_\Theta}

\newcommand{\gpi}{g_\pi}
\newcommand{\xinit}{\nu_0}

\NewDocumentCommand{\Dct}{ O{t} O{\Cx}}{D_{#2,#1}}

\DeclareMathOperator{\Det}{Det}

\DeclareMathOperator{\cov}{cov}

\begin{document}

\title{Ergodicity of an Adaptive MCMC Sampler under a Probability Bound}
\author{Alexandre~Chotard \\ {\tt alexandre.chotard@univ-littoral.fr}}
\affil{OSMOSE, Université du Littoral Côte d'Opale}
\date{}

\maketitle

\begin{abstract}
This paper provides sufficient conditions over the sequence of samples and parameters of an adaptive Markov Chain Monte Carlo (MCMC) algorithm to ensure ergodicity with respect to a target distribution that can have unbounded support. These conditions aim to make more easily usable the conditions of Containment and Diminishing Adaptation from \cite{roberts2007ergodicitymcmc} formulated over the transition kernels, without needing, as was done in other works, an artificial assumption of the compactness over both sample and parameter spaces. The paper shows that the condition of compactness can be relaxed to a more realistic bound in probability over the sequence of both samples and parameters.
\end{abstract}

\section{Introduction}

Markov Chain Monte Carlo (MCMC) methods are a widely used family of methods which aim to generate samples from a target distribution $\pi$ by constructing a Markov chain $(\vxt)_{t \in \N}$ with Markov kernel $P$ such that asymptotically, $\vxt$ will have the same law as $\pi$. A good choice of the kernel $P$ relatively to $\pi$ is essential for the efficiency of the sampling process, and different techniques allow for online adaptation of a parametrized kernel $P_{\paramt}$ for $\paramt \in \ptspace$ in order to improve sampling efficiency (see e.g. \cite{roberts2009examples}). However, it is well known that even if for any $\paramt \in \ptspace$ each kernel $P_{\paramt}$ were ergodic, the adaptive process itself may not be ergodic relatively to $\pi$ (see e.g.~\cite{roberts2007ergodicitymcmc}).

A large body of work has been developed in order to provide conditions to ensure the ergodicity of an adaptive MCMC, with different approaches: using mixingales~\cite{haario2001,atchade2005adaptivemcmc}, martingale decomposition solving the Poisson equation 
\cite{andrieu2006ergadaptivemcmc,saksman2010ergadaptivemcmcunbounded}, a coupling construction~\cite{roberts2007ergodicitymcmc} and others~\cite{atchade2010adaptivemcmcresolvents,chimisov2018airmcmc}. This paper focuses on the coupling approach~\cite{roberts2007ergodicitymcmc}, which guarantees the ergodicity of the adaptive process provided that the Markov chain is in some sense contained, and that the adaptation diminishes to $0$ over time. Different variations to the conditions of containment and of diminishing adaptation exist, and here we follow the formulation of \cite[Theorem~2]{roberts2007ergodicitymcmc}: 
\begin{enumerate}[label=(A\arabic*)]
	\item  \label{cond:probability_bound} Containment: for all $\epsilon>0$, let $M_\epsilon$ denote the function
	\begin{equation*} \label{eq:Mepsilon}
		M_\epsilon : (x, \paramt) \mapsto \inf_{t \in \N^*} \lbrace \|P^t_{\paramt}(x, \cdot) - \pi\| < \epsilon\rbrace \enspace .
	\end{equation*}
	For any initial condition $(\xt[0], \paramt[0])$ and $\epsilon >0$ the sequence $(M_\epsilon(\vxt, \vparamtt))_{t \in \N}$ is bounded in probability, i.e. for all $\delta > 0$ there exists $T_\epsilon > 0$ such that $\Pr(M_{\epsilon}(\vxt, \vparamtt) < T_\epsilon| \vxt[0]=\xt[0], \vparamtt[0]=\paramt[0]) \geq 1 - \delta$ for all integer $t$.
\end{enumerate}
\begin{enumerate}[label=(A2.a)]
	\item  \label{cond:diminishing_adaptation} Diminishing Adaptation: the random variable $D_t := \sup_{x \in \xdom} \| P_{\vparamttt}(x, \cdot) - P_{\vparamtt}(x, \cdot)\|$ converges in probability to $0$.
\end{enumerate}
If $\xdom \times \ptspace$ is compact, then the continuity of the densities underlying the adaptive MCMC can be used to easily show that \ref{cond:probability_bound} and \ref{cond:diminishing_adaptation} hold (see e.g. \cite[Corollary~4]{roberts2007ergodicitymcmc}). On unbounded domains however, these conditions are more difficult to show: \cite{craiu2015stability} proved the ergodicity of the adaptive process, with the additional assumptions that the kernels have bounded jumps, and that outside of some compact $K$, the process switches to a fixed kernel. This approach has been extended in \cite{rosenthal2018ergcombocontinuous} to truncated combocontinuous densities. 

In the context of adaptive MCMC with continuous densities, this paper aims to make the conditions of containment and diminishing adaptation easier to verify. First, a minor change is that the parameter  $\paramt$ is split into $(\paramk, \paramu)$ in $\pkspace \times \puspace = \ptspace$, where $\paramk$ parametrizes the sampling distribution $P_{\paramk}$, and $\paramu$ contains parameters that controls the adaptation process itself. This modelling allows a little more leeway: while the sequence of distribution parameters $(\vparamkt)_t$ will be required to have some stability, there is no direct constraint to the evolution of the adaptation variables $(\vparamut)_t$, as long as the sequences $(\vxt)_t$ and $(\vparamkt)_t$ remain well-behaved. For example, $\paramu$ could contain a parameter that reduces the size of the updates of $\paramk$, and this parameter would be free to diverge to infinity. Secondly, the conditions of containment and diminishing adaptation are expressed directly using topology and metrics on the state and parameter space, instead of total variation between kernels. With $\dk$ a metric on the space $\pkspace$:
\begin{enumerate}[label={(B\arabic*)}]
	\item \label{cond:containment_param} The sequences $(\vxt)_t$ and $(\vparamkt)_t$ are bounded in probability, i.e. for all $\delta > 0$, there exists a compact set $C_\delta$ on the product space $\xdom \times \pkspace$ such that $\Pr((\vxt, \vparamkt) \in C_\delta) \geq 1 - \delta$ for all $t \in \N$.
	\item \label{cond:da_param} The random variable $\dk(\vparamktt, \vparamkt)$ converges to $0$ in probability.
\end{enumerate}
Thirdly, ergodicity is shown without further assumption of compactness of the state space or the parameter space, which can be seen as a generalisation of \cite[Corollary~3]{roberts2007ergodicitymcmc}. The regularity provided by the continuity of functions underlying the kernels $(P_{\paramk})_{\paramk}$ suffices to show a sort of ``local tightness'' which is heavily exploited in this paper. 
Finally, this work also uses a slightly weaker form for diminishing adaptation, which suffice for our purpose:
\begin{enumerate}[label=(A2.b)]
	\item  \label{cond:diminishing_adaptation_Cx} for any compact set $\Cx \subset \xdom$, the random variable $\Dct := \sup_{x \in \Cx} \| P_{\vparamktt}(x, \cdot) - P_{\vparamkt}(x, \cdot)\|$ converges in probability to $0$.
\end{enumerate}

Note that proofs of ergodicity in this paper are conditional to \ref{cond:containment_param} and \ref{cond:da_param}: complete proofs of ergodicity still require the verification of these conditions, which is a difficult problem in itself. The difficulty ``solved'' here is mostly of tackling the unboundedness of the state and parameter spaces. On compact spaces, containment and diminishing adaptation follows trivially from \ref{cond:containment_param} and \ref{cond:da_param} with applications of Lebesgue's dominated convergence theorem. An additional, perhaps obvious note: compactness may seem to be a cheap assumption to make, as the compacts could be taken large enough to be irrelevant in practice. The price however, is that ergodicity becomes just as irrelevant: if an adaptive MCMC is only ergodic due to being constrained to a bound that it never reaches in practice, then the algorithm is useless and showing its ergodicity is misleading.

The paper is organized as follows: section~\ref{sec:preliminaries} introduces more thoroughly the setting and the notations used throughout the paper; section~\ref{sec:main} contains the remaining conditions used in the paper, the intermediary steps and the main results; finally section~\ref{sec:appli} applies the tools developed in this paper to two different adaptive Metropolis-Hastings algorithms. The appendix contains a short technical lemma used in section~\ref{sec:main}.

\section{Setting and Notations} \label{sec:preliminaries}

\subsection{Notations}

The set of non-negative integers and of real numbers are respectively denoted $\N$ and $\R$.  The size $\dim$ identity matrix is denoted $\Id$, and $\cspace$ for the space of $\dim\times\dim$-positive definite symmetric matrices with coefficients in $\R$. For $A$ a matrix and  $x$ and $y$ vectors of $\R^n$, $A^T$ is the transpose of $A$ and $xy^T$ denotes the outer product between $x$ and $y$. For $X$ a random variable, $X \sim \pi$ denotes that $X$ follows a distribution $\pi$, and $\E(X)$ is the expected value of $X$. For $x \in \R^n$ and $C \in \cspace$, $\Nl(x, C)$ denotes the multivariate normal distribution with mean $x$ and covariance matrix $C$. Elements of $\xdom$, $\pkspace$ and $\puspace$ are typically denoted $x$, $\paramk$ and $\paramu$, while random variables over these spaces are denoted $\vxt[]$, $\vparamk$ and $\vparamu$.

\subsection{Setting}

The state space $\xdom$ is assumed to be a metric set equipped with a distance $\dx$, a sigma-algebra $\xsalgebra$ and a measure $\xmeasure$, on which is defined the target probability distribution $\pi$ that we want to sample from, that we suppose absolutely continuous relatively to $\xmeasure$, with density $\gpi$. The parameter and adaptation spaces $\pkspace$ and $\puspace$ are measurable spaces with sigma-algebras $\ksalg$ and $\usalg$, and $\pkspace$ admits a distance function $\dk$.

A MCMC algorithm with parameter $\paramk \in \pkspace$ and probability kernel $P_{\paramk}$ defines a time-homogeneous Markov-chain $(\vxt)_{t \in \N}$, which we want  \strong{ergodic} relatively to $\pi$, i.e. that for all $x \in \xdom$,
\begin{equation}	\label{eq:ergodicity}
	\lim_{t \to \infty} \| P^t_{\paramk}(x, \cdot)  - \pi \| = 0 \enspace ,
\end{equation} 
where $\|\nu\|$ denotes the total variation of a signed measure $\nu$ on $\xsalgebra$, that is
\begin{equation}	\label{eq:totalvariation}
	\|\nu\| := \sup_{A \in \xsalgebra} |\nu(A)| \enspace .
\end{equation}

A Metropolis-Hastings algorithm (MH) uses a so-called proposal or candidate distribution $\Qp$ to sample from some $\vxt$ valued in $\xdom$ some candidate $\vyt$ also in $\xdom$. Suppose that for all $x \in \xdom$, $\Qp(x, \cdot)$ is absolutely continuous with respect to $\xmeasure$, with density $\qp(x, \cdot)$, and let $\pacct$ denote the probability of accepting $y$ knowing $x$, defined as
\begin{equation}
	\pacct := \min\left(1,  \frac{\gpi(y) \qp(y,x)}{\gpi(x)\qp(x,y)} \right) \enspace .
\end{equation}
From an initial point $\vxt[0]$ following some initial distribution $\xinit$, the MH algorithm accepts the candidate $\vyt \sim \Qp(\vxt, \cdot)$ with probability $\alpha(\vxt, \vyt)$, and otherwise doesn't move:
\begin{equation}
	\vxtt := \left\lbrace \begin{array}{l}
		\vyt \textrm{ with probability } \alpha_{\paramk}(\vxt, \vyt) \enspace , \\
		\vxt \textrm{ else.}
	\end{array} \right.
\end{equation}
The MH is a MCMC algorithm whose transition kernel $P_{\paramk}$ writes
\begin{equation} \label{eq:ptheta}
	P_{\theta}(x, dy) = \alpha_\theta(x, y) \Qp(x, dy) + (1 - a(x, \theta)) \dirac{x}(dy) \enspace ,
\end{equation}
where $\dirac{x}$ denotes the Dirac measure at $x$, and $a(x, \theta)$ is the probability of a candidate being accepted, i.e.
\begin{equation} \label{eq:praccept}
	a(x, \theta) := \int_\xdom \alpha_\theta(y | x) \Qp(dy | x) \enspace .
\end{equation}
Note that when $\qp$ is symmetric with respect to $x$ and $y$, then $\pacct$ simplifies to $\min(1, \gpi(y)/\gpi(x))$.

An \strong{adaptive} MCMC defines a time-homogeneous Markov chain $\chaint$ on $\xdom \times \pkspace \times \puspace$. We denote $\alg$ the marginal of the chain's transition kernel with respect to $\xdom$, i.e. the function which to $(x, \paramk, \paramu, A) \in \xdom \times \pkspace \times \puspace \times \xsalgebra$ maps
\begin{equation}	\label{eq:alg}
	\algt(x, \paramk, \paramu, A) := \Pr\left( \vxt \in A | \vxt[0] = x, ~\vparamkt[0] = \paramk, ~\vparamut[0] = \paramu\right) \enspace .
\end{equation}
An adaptive MCMC or $\alg$ is called \strong{ergodic} with respect to $\pi$ if for any initial condition $\initchain$, $\|\algt(x_0, \paramk[0], \paramu[0], \cdot) - \pi\|$ converges to $0$. 
As previously mentioned, even if each non-adaptive kernel $P_{\paramk}$ is ergodic for any ${\paramk} \in \pkspace$, successive modifications of ${\paramk}$  can prevent the convergence of $\algt$ to $\pi$: an intuitive example of this phenomenon is a MH algorithm where the scaling of the proposal is decreased outside of a region and increased inside the same region, thus creating a bias to be outside of this region; or if its proposal $\Qpt$ would diverge. 

\section{Main Results}	\label{sec:main}

\subsection{Conditions}	\label{subsec:cond}

I first present the conditions needed in the paper. In addition to conditions \ref{cond:probability_bound}, \ref{cond:diminishing_adaptation}, \ref{cond:diminishing_adaptation_Cx}, \ref{cond:containment_param} and \ref{cond:da_param} previously introduced, we require the ergodicity of the non-adaptive kernels:
\begin{enumerate}[label={(B\arabic*)}]
	\setcounter{enumi}{2}
	\item \label{cond:pergodic} For every $\paramk{} \in \pkspace$, the transition kernel $P_{\paramk}$ is ergodic relatively to $\pi$.
\end{enumerate}
The next conditions focus on the continuity of parts of the kernel $P_{\paramk}(x, \cdot)$ over both $x$ and $\paramk$. Note that, in many applications of interest such as for Metropolis-Hastings algorithms, the kernel itself is not continuous, in the sense that the total variation $\|P_{\paramk}(x,\cdot) - P_{\tilde{\paramk}}(y,\cdot)\|$ does not go to $0$ when $(x, \paramk)$ goes to $(y, \tilde{\paramk})$. Indeed, for a Metropolis-Hastings, the difference between the two Dirac contained in the singular parts of the kernels remain constant for $x \neq y$. Therefore, following Lebesgue's decomposition Theorem with respect to the measure $\xmeasure$, the kernel $P^t_{\paramk}(x, \cdot)$ is split into its singular part $\Bt$ and absolutely continuous part $\ackernt$ with density function $\acpt(x, \cdot)$. The condition of continuity applies to the density and the total mass of each part, instead of the full kernel:
\begin{enumerate}[label={(C\arabic*)}]
	\item For all integer $t\geq 1$, the functions $\tilde{p}^t : (x, \paramk{}, y) \mapsto \acpt(x,y)$ and $(x, \theta) \mapsto \ackernt[\xdom]$ are continuous.
	\label{cond:ptcontinuity}
\end{enumerate}
This of course implies the continuity of $\Bt[\xdom]$, the mass of the singular part, and as shown later in lemma~\ref{lemma:tvc0}, is sufficient to ensure the continuity of $\|P^t_\theta(x, \cdot) - \pi\|$, and connect, as done in proposition~\ref{proposition:containment}, the condition for containment \ref{cond:probability_bound} to \ref{cond:containment_param}.  
In order to verify diminishing adaptation, it is useful to have the continuity of $\|P_{\theta}(x,\cdot) - P_{\ttheta}(x,\cdot)\|$ (note that only $\theta$ and $\ttheta$ differ). As shown in proposition~\ref{prop:dacompact}, this can be ensured with this additional assumption on the singular part $\B$, connecting our weaker condition for diminishing adaptation \ref{cond:diminishing_adaptation_Cx} to \ref{cond:da_param}:
\begin{enumerate}[resume,label={(C\arabic*)}]
	\item The family $(\B)_{(x, \theta) \in \xdom \times \Theta}$ is a family of discrete distributions tied in the following way: there exists a countable set $I$ such that for any $(x,\theta)$, $\B$ can be written as a sum of dirac at points $s_i(x)$ with weights $b_i(x,\theta)$, such that $\B = \sum_{i \in I} b_i(x,\theta) \delta_{s_i(x)}$. Furthermore, the functions $(b_i)_i$ and $(s_i)_i$ are continuous with respect to both $x$ and $\theta$. \label{cond:bdiscrete}
\end{enumerate}
Note that the functions $s_i$ do not depend of $\theta$: thus this condition restrict, crucially, $B_\theta(x, \cdot)$ and $B_\ttheta(x, \cdot)$ to share the same points (albeit with possibly different, but continuous, weights).

The remaining conditions are not strictly needed, but can accelerate proofs, and are provided for convenience.
In particular, verifying condition~\ref{cond:ptcontinuity} for all $t\in \N$ can be needlessly tedious, and can be done more easily (see  lemma~\ref{lemma:ptc0}), provided that the following conditions hold:
\begin{enumerate}[label={(C\arabic*)},resume]
	\item The function mapping $(x, \paramk{}, y) \to \acp(x, y)$ is jointly continuous, and the function $(x, \paramk{}) \to \ackern[\xdom]$ is jointly continuous. \label{cond:pcontinuity}
	\item The family $(p_\theta(x, y))_{x \in \xdom}$ is locally bounded, i.e. for any $(\theta_0, y_0) \in \Theta \times \xdom$ there exists $U$ a neighbourhood of $(\theta_0, y_0)$ and  $M$ such that $p_\theta(x,y) < M$ for all $x \in \xdom$ and all $(\theta,y) \in U$. \label{cond:plocalbound}
	\item Condition~\ref{cond:bdiscrete} hold, and for any $s_i$, either $s_i$ has a constant value $a_i \in \xdom$, either $s_i$ is the identity function $s_i(x) = x$.
	\label{cond:bsupcond} 
\end{enumerate} 
Condition~\ref{cond:bsupcond} was formulated to include Metropolis-Hastings methods (with a unique atom $s_0(x) = x$ and weight $b_0$ equal to the rejection probability $1-a(x,\theta)$, see \eqref{eq:ptheta}), and the possibility of ``jumping'' to some fixed point. Different and more general assumptions on the functions $(s_i)_i$ would also work.
For an adaptive Metropolis-Hastings, as condition~\ref{cond:bsupcond} is verified, instead of \ref{cond:pcontinuity} and \ref{cond:plocalbound}, the following condition suffice:
\begin{enumerate}[resume,label={(C\arabic*)}]	
	\item the family of measures $(\Qp(x, \cdot))_{x,\paramk}$ is absolutely continuous with respect to $\xmeasure$, admitting densities $(\qp(x,\cdot))_{x,\paramk}$. Furthermore, the density $\gpi$ and the function $q:(x, y, \paramk) \mapsto \qp(x,y)$ are continuous functions, and the family $(\qp(x,y))_{x \in \xdom}$ is locally bounded (see condition \ref{cond:plocalbound}).	\label{cond:qcontinuity}
\end{enumerate}

Note that there is no direct condition on the sequence $(\vparamut)_t$, only indirectly that conditions~\ref{cond:containment_param} and \ref{cond:da_param} hold. 
The following subsection details intermediary results which are later used to transfer the continuity of $p$, $b_i$ and $s_i$ from condition~\ref{cond:ptcontinuity} and condition~\ref{cond:bdiscrete} to continuity in total variation.

\subsection{Continuity Results}	\label{subsec:continuity}

When $\xdom$ is compact, the continuity of $\|P^t_\theta(x,\cdot) - \pi\|$ can easily be derived from condition~\ref{cond:ptcontinuity} using Lebesgue's dominated convergence theorem. 
An important property that allows us to fall back to this simple case is the property of tightness: as a reminder, for $F$ a family of measures over a Hausdorff space $(\xdom, T)$ equipped with a $\sigma$-algebra $\xsalgebra$ which contains the topology $T$, the family $F$ is called \strong{tight} if for any $\epsilon > 0$ there exists a compact subset $K_\epsilon$ of $\xdom$ such that $\mu(K_\epsilon^c) < \epsilon$ for all $\mu \in F$. Under condition~\ref{cond:pcontinuity}, there is a ``local tightness'' in the sense that for any compact $C$ of $\xdom\times \Theta$, the family $(\tilde{P}_\theta(x,\cdot))_{(x,\theta) \in C}$ is tight. This is expressed in a more general context in the following proposition.

\begin{proposition}	\label{prop:tightness}
	Let $(\xdom, \xsalgebra)$ be a $\sigma$-locally compact Hausdorff measurable space equipped with a locally finite measure $\xmeasure$, 
	and let $\Gamma$ be a locally compact Hausdorff space. Let $(\mu_{\gamma})_{\gamma \in \Gamma}$ be a family of finite measures absolutely continuous with respect to $\xmeasure$, with densities $(h_{\gamma})_\gamma$. If the function $h: (x,\gamma) \mapsto h_{\gamma}(x)$ and the total mass function $m: \gamma \mapsto \mu_{\gamma}(\xdom)$ are continuous with respect to $\gamma$, then for any compact set $C$ of $\Gamma$, the family of measures $(\mu_\gamma)_{\gamma \in C}$ is tight.
\end{proposition}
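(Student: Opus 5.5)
The plan is a Scheffé-type argument: continuity of the densities gives uniform control of the mass on any fixed compact set, and continuity of the total mass transfers this to the complement.

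Fix a compact $C \subset \Gamma$ and $\epsilon > 0$. Since $\xdom$ is $\sigma$-locally compact, write $\xdom = \bigcup_k K_k$ with $K_k$ compact, and replace it by the increasing sequence $L_k := K_1 \cup \dots \cup K_k$. It suffices to find a single $k$ with $\mu_\gamma(L_k^c) < \epsilon$ for every $\gamma \in C$.

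First I work pointwise in $\gamma$. For a fixed $\gamma_0 \in C$, monotone convergence gives $\mu_{\gamma_0}(L_k) \uparrow \mu_{\gamma_0}(\xdom) = m(\gamma_0) < \infty$. Hence there is $k_0$ with $\mu_{\gamma_0}(L_{k_0}^c) < \epsilon/3$.

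Next I propagate this to a neighbourhood of $\gamma_0$, using
\begin{equation*}
\mu_\gamma(L_{k_0}^c) = m(\gamma) - \int_{L_{k_0}} h_\gamma \, d\xmeasure .
\end{equation*}
The function $h$ is jointly continuous on $\xdom \times \Gamma$. Take a compact neighbourhood $V$ of $\gamma_0$, which exists because $\Gamma$ is locally compact Hausdorff. Then $h$ is uniformly continuous in $\gamma$ on the compact set $L_{k_0} \times V$, in the sense that $\sup_{x \in L_{k_0}} |h_\gamma(x) - h_{\gamma_0}(x)| \to 0$ as $\gamma \to \gamma_0$. This follows by the standard tube-lemma argument: for each $x$ pick a product neighbourhood on which $h$ varies by less than $\eta$, then cover $L_{k_0}$ by finitely many of them.

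Since $\xmeasure$ is locally finite and $L_{k_0}$ is compact, $\xmeasure(L_{k_0}) < \infty$. Therefore $\gamma \mapsto \int_{L_{k_0}} h_\gamma \, d\xmeasure$ is continuous at $\gamma_0$. Combined with the continuity of $m$, this yields an open neighbourhood $U_{\gamma_0}$ of $\gamma_0$ on which $\mu_\gamma(L_{k_0}^c) < \epsilon$.

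Finally, compactness of $C$ lets me extract finitely many $\gamma_1, \dots, \gamma_N$ whose neighbourhoods $U_{\gamma_j}$ cover $C$. Take $k = \max_j k_j$. Because the $L_k$ increase, $\mu_\gamma(L_k^c) \le \mu_\gamma(L_{k_j}^c) < \epsilon$ whenever $\gamma \in U_{\gamma_j}$. Setting $K_\epsilon := L_k$ proves tightness.

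The main obstacle is the uniform-in-$x$ continuity of $h$ over $L_{k_0}$. It needs joint continuity of $h$, which is how I read the hypothesis that $h:(x,\gamma) \mapsto h_\gamma(x)$ is continuous, and not merely continuity in $\gamma$ for each fixed $x$. If only separate continuity were available, I would instead use Fatou's lemma. Fatou gives lower semicontinuity of $\gamma \mapsto \int_{L_{k_0}} h_\gamma \, d\xmeasure$ from pointwise convergence and $h \ge 0$, and together with continuity of $m$ this gives upper semicontinuity of $\gamma \mapsto \mu_\gamma(L_{k_0}^c)$. That is exactly what the neighbourhood step requires, so the Fatou route works under either reading of the hypothesis.

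Fatou requires sequences, so in a non-metrizable $\Gamma$ I would phrase the argument with nets, or argue by contradiction with nets. Since $\Gamma$ is first countable in all of the paper's applications, sequences should suffice there.
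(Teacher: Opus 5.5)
Your proof is correct and is essentially the paper's argument: the paper applies Dini's theorem to the increasing continuous functions $m_i(\gamma)=\mu_\gamma(K_i)$ converging to the continuous $m$ on $C$, and your pointwise-tail, neighbourhood and finite-subcover steps are exactly Dini's proof written out, with your tube-lemma uniform estimate on $L_{k_0}$ replacing the paper's local-boundedness-plus-dominated-convergence justification that each $m_i$ is continuous. Your Fatou remark is a genuine small refinement: only lower semicontinuity of $m_i$ is needed, so separate continuity of $h$ suffices when $\Gamma$ is first countable. But drop the suggestion to redo the Fatou step with nets, since Fatou's lemma fails for nets (e.g.\ indicators of finite subsets of $[0,1]$ directed by inclusion), and under joint continuity your main argument needs no countability assumption anyway.
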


\begin{proof}
	Let $C$ be a compact subset of $\Gamma$. As $\xdom$ is $\sigma$-locally compact, there exists a nested sequence of  compact sets $(K_i)_{i \in \N}$ with $K_i \subset K_{i+1}$ such that $\cup_i K_i = \xdom$. Let $m_i$ denote the function $\gamma \in C \mapsto \mu_\gamma(K_i)$ and $m_C: \gamma \in C \mapsto \mu_\gamma(\xdom)$ the restriction of $m$ to $C$. If the sequence $(m_i)_i$ converges uniformly to $m_C$, then the family $(\mu_\gamma)_{\gamma \in C}$ is tight: indeed, then for all $\epsilon > 0$, there exists $T_\epsilon$ such that for all $i \geq T_\epsilon$, $|m_i(\gamma) - m_C(\gamma)| < \epsilon$ for all $\gamma \in C$. As $|m_i(\gamma) - m_C(\gamma)|$ equals $\mu_\gamma(K_i^c)$, we have $\mu_\gamma(K_{T_\epsilon}^c) < \epsilon$ for any $\epsilon> 0$ and any $\gamma \in C$, i.e. $(\mu_\gamma)_{\gamma \in C}$ is tight.
	
	The uniform convergence of $(m_i)_i$ to $m_C$ follows from Dini's theorem. Dini's theorem may be applied as $m_C$ is continuous (by assumption), that $(m_i)_i$ and $m_C$ are defined on a compact set $C$, that $(m_i)_i$ is a monotonically increasing sequence which converges pointwise to $m_C$ (by the monotone convergence theorem), and that each $m_i$ is a continuous function. This last point follows from the continuity of $h_\gamma(x)$ with respect to $\gamma$, which is thus locally bounded on $K_i$ by some $M_i$, therefore integrable on $K_i$ (since $\xmeasure(K_i) < \infty$, as the local finiteness of $\xmeasure$ implies finiteness on compacts). Thus Lebesgue's dominated convergence theorem can be applied, showing the continuity of $m_i$.
\end{proof}

This ``local tightness'' can be used to easily show continuity under integration, as in the following lemma.
\begin{corollary}  \label{coro:tightc0}
	Let $(\xdom, \xsalgebra)$ be a $\sigma$-locally compact Hausdorff measurable space equipped with a locally finite measure $\xmeasure$, and let $\Gamma$ be a locally compact Hausdorff space. 
	Consider $(\mu_\gamma)_{\gamma \in \Gamma}$ a family of finite measure absolutely continuous with respect to $\xmeasure$, with densities $(h_{\gamma})_\gamma$. Suppose that the function $h : (x, \gamma) \mapsto h_\gamma(x)$ is continuous with respect to $\gamma$, and that the family $(\mu_\gamma)_{\gamma \in C}$ is tight for any $C$ compact subset of $\Gamma$. For any function $f:\xdom \times \Gamma \to \R$ continuous with respect to $\gamma$ and such that the family $(f(x,\gamma))_{x \in \xdom}$ is locally bounded, the function $\mu f: \gamma \mapsto \int f(x, \gamma)h_\gamma(x)\xmeasure(dx)$ and the function $\Delta \mu f: (\gamma, \tgamma) \mapsto \int |f(x, \gamma)h_\gamma(x) - f(x,\tgamma)h_{\tgamma}(x)|\xmeasure(dx)$ are continuous.
\end{corollary}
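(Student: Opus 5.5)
The plan is a compact/tail splitting argument: fix $\gamma_0 \in \Gamma$ and show continuity of $\mu f$ at $\gamma_0$, and of $\Delta\mu f$ at a pair $(\gamma_0,\tgamma[0])$. Since $\Gamma$ is locally compact Hausdorff, choose a compact neighbourhood $C$ of $\gamma_0$ (for $\Delta\mu f$, a compact neighbourhood $C$ containing neighbourhoods of both $\gamma_0$ and $\tgamma[0]$, e.g. the union of two compact neighbourhoods). By hypothesis $(\mu_\gamma)_{\gamma\in C}$ is tight. So for $\epsilon>0$ there is a compact $K\subset\xdom$ with $\mu_\gamma(K^c)<\epsilon$ for all $\gamma\in C$. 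Then
\[
\mu f(\gamma) = \int_K f(x,\gamma)h_\gamma(x)\xmeasure(dx) + \int_{K^c} f(x,\gamma)h_\gamma(x)\xmeasure(dx),
\]
and I treat the two pieces separately.

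For the tail piece, I need $f$ bounded uniformly in $x$ near $\gamma_0$. I read the local boundedness of $(f(x,\gamma))_{x\in\xdom}$ as in condition~\ref{cond:plocalbound}: there is a neighbourhood $U$ of $\gamma_0$ and $M$ with $|f(x,\gamma)|\le M$ for all $x\in\xdom$ and $\gamma\in U$. Shrinking $C$ to lie inside $U$, the tail piece is bounded by $M\epsilon$ uniformly over $\gamma\in C$.

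For the compact piece, take $\gamma\to\gamma_0$ within $C$. The integrand $f(x,\gamma)h_\gamma(x)$ converges pointwise in $x$ by continuity in $\gamma$. I dominate it on $K$ using $|f|\le M$ together with a bound on $h_\gamma(x)$ over $K\times C$; finiteness of $\xmeasure(K)$ then makes the constant integrable, and dominated convergence gives continuity of $\gamma\mapsto\int_K f h_\gamma\,d\xmeasure$. Combining with the uniform $M\epsilon$ tail gives $\limsup|\mu f(\gamma)-\mu f(\gamma_0)|\le 2M\epsilon$, and since $\epsilon$ is arbitrary this proves continuity. For sequences versus nets, I use nets throughout (or assume first countability), since dominated convergence is stated for sequences; I expect this to be the usual practice in the paper. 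For $\Delta\mu f$ the argument is identical. I split the integral over $K$ and $K^c$, bound the tail by $M(\mu_\gamma(K^c)+\mu_{\tgamma}(K^c))\le 2M\epsilon$, and on $K$ apply dominated convergence to $|f(x,\gamma)h_\gamma(x)-f(x,\tgamma)h_{\tgamma}(x)|$, which is continuous in $(\gamma,\tgamma)$ and dominated by twice the previous bound.

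The main obstacle is the domination of $h_\gamma$ on $K\times C$. The statement only assumes continuity of $h$ with respect to $\gamma$, not joint continuity, so boundedness on $K\times C$ is not automatic. I would first try joint continuity of $(x,\gamma)\mapsto h_\gamma(x)$, which is what Proposition~\ref{prop:tightness} uses (its proof bounds $h$ on $K_i$ by $M_i$) and what condition~\ref{cond:pcontinuity} provides. Then $h$ is bounded on the compact $K\times C$ and the argument goes through. If only separate continuity in $\gamma$ is available, I would avoid domination: use Scheffé-type reasoning, namely pointwise convergence $h_\gamma\to h_{\gamma_0}$ together with convergence of the masses $\mu_\gamma(K)\to\mu_{\gamma_0}(K)$, to get $L^1(K)$ convergence of $h_\gamma$. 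Mass convergence follows from tightness plus continuity of total mass $m$ if that is assumed as in Proposition~\ref{prop:tightness}. The bounded factor $f$ is then handled by splitting $f_\gamma h_\gamma - f_{\gamma_0}h_{\gamma_0} = f_\gamma(h_\gamma-h_{\gamma_0}) + (f_\gamma-f_{\gamma_0})h_{\gamma_0}$ and using dominated convergence on the second term with dominator $2Mh_{\gamma_0}$.
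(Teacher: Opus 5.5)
Your proposal is correct and is essentially the paper's proof: choose a compact neighbourhood $C$ of $\gamma_0$ on which $|f|<M$ uniformly in $x$, use tightness on $C$ to get a compact $K_\epsilon$ whose complement contributes at most $M\epsilon$, and apply dominated convergence on $K_\epsilon$, with the same argument for $\Delta\mu f$. You are right that dominated convergence on $K_\epsilon$ needs $h$ bounded on $K_\epsilon\times C$, hence joint continuity rather than continuity in $\gamma$ alone; the paper uses this tacitly (as in the proof of Proposition~\ref{prop:tightness}), and it cannot simply be dropped, since on $\R$ the densities $h_\gamma(x)=\gamma^{-1}\phi(x/\gamma)$ for $\gamma\in(0,1]$ and $h_0\equiv 0$, with $\phi$ a continuous unit-mass bump supported in $[1,2]$, are continuous in $\gamma$ for each $x$ and tight, yet $\mu f$ with $f\equiv 1$ jumps from $0$ to $1$ at $\gamma=0$.
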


\begin{proof}
	This is easily derived from Lebesgue's dominated convergence theorem. Take $\gamma_0 \in \Gamma$, $M>0$ and $C$ a compact neighbourhood of $\gamma_0$ such that $|f(x,\gamma)| < M$ for all $x\in\xdom$ and $\gamma\in C$. As $(\mu_\gamma)_{\gamma \in C}$ is tight (by proposition~\ref{prop:tightness}), for any $\epsilon>0$ there exists $K_\epsilon$ a compact subset of $\xdom$ such that $\mu_\gamma(K_\epsilon) < \epsilon/M$ for all $\gamma \in C$. Thus for $\gamma \in C$, $\mu f (\gamma)$ can be divided into $\int_{K_\epsilon}f(x,\gamma)h_\gamma(x)\xmeasure(dx)$, which by Lebesgue's dominated convergence theorem is a continuous function of $\gamma$, and $\int_{K_\epsilon^c}f(x,\gamma)h_\gamma(x)\xmeasure(dx)$ which is inferior in absolute value to $\epsilon$. Consequently, the difference $|\mu f(\gamma) - \mu f(\tgamma)|$ can be made arbitrarily small by taking $\tgamma$ close enough to $\gamma$, meaning $\mu f$ is continuous. The same reasoning can be applied to $\Delta \mu f$.
\end{proof}

Proposition~\ref{prop:tightness} and corollary~\ref{coro:tightc0} can also be used on families of discrete measures, as in the following corollary.

\begin{corollary}	\label{coro:tightbc0}
	Let $(\xdom, \xsalgebra, \xmeasure)$ be an Hausdorff measure space, and let $\Gamma$ be a locally compact Hausdorff space. Consider $(B_\gamma)_{\gamma \in \Gamma}$ a family of finite discrete measure with respect to $\xmeasure$, and suppose that this family can be written as the sum of weighted Dirac $\sum_{i\in\N} b_i(\gamma) \dirac{s_i(\gamma)}$. Suppose that $b_i(\gamma)$ and $B_\gamma(\xdom)$ are continuous functions of $\gamma$ for any $i$, then for any compact set $C$ of $\Gamma$ and any $\epsilon > 0$, there exists $T_\epsilon$ such that $\sum_{i\geq T_\epsilon} b_i(\gamma) < \epsilon$ for all $\gamma \in C$. 
	
	Furthermore, for any locally bounded family of continuous functions $(f_i)_i$ from $\xdom \times \Gamma$ to $\R$, the function $Bf : (x,\gamma) \mapsto \sum_i b_i(\gamma)f_i(x, \gamma)$ and the function $\Delta B f: (x, \gamma, \tgamma) \mapsto \sum_i |b_i(\gamma)f_i(x, \gamma) - b_i(\tgamma)f_i(x,\tgamma)|$ are continuous with respect to $x$ and $\gamma$. 
\end{corollary}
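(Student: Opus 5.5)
The first claim follows from Proposition~\ref{prop:tightness}, applied with $\xdom$ replaced by $\N$. Equip $\N$ with the discrete topology, which is $\sigma$-locally compact Hausdorff, and with the counting measure, which is locally finite. For each $\gamma$ define the measure $\nu_\gamma$ on $\N$ with density $h_\gamma(i) := b_i(\gamma)$ with respect to counting measure. Its total mass is $\nu_\gamma(\N) = \sum_i b_i(\gamma) = B_\gamma(\xdom)$, which is continuous in $\gamma$ by assumption. Each $b_i$ is continuous, so $h$ is continuous in $(i,\gamma)$ because $\N$ is discrete. Proposition~\ref{prop:tightness} therefore gives tightness of $(\nu_\gamma)_{\gamma\in C}$ for each compact $C \subset \Gamma$. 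Compact subsets of $\N$ are finite, so there is a $T_\epsilon$ with $\nu_\gamma(\{i \geq T_\epsilon\}) = \sum_{i\geq T_\epsilon} b_i(\gamma) < \epsilon$ for all $\gamma \in C$, which is the first claim. I would note that the weights $b_i$ are nonnegative and the $B_\gamma$ are finite; the positions $s_i(\gamma)$ play no role in this part.

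For the second claim I would follow the proof of Corollary~\ref{coro:tightc0}, but prove joint continuity in $(x,\gamma)$ directly by a truncation argument. Fix $(x_0,\gamma_0)$ and pick a compact neighbourhood $C$ of $\gamma_0$. I need a uniform bound $|f_i(x,\gamma)| \leq M$ for all $i$ and all $(x,\gamma)$ in $V\times C$, where $V$ is a neighbourhood of $x_0$; I read ``locally bounded family'' as supplying exactly this. Shrinking $C$ if necessary, I choose $T_\epsilon$ from the first part so that $\sum_{i\geq T_\epsilon} b_i(\gamma) < \epsilon/M$ on $C$. Then I split
\[
Bf(x,\gamma) = \sum_{i< T_\epsilon} b_i(\gamma) f_i(x,\gamma) + R(x,\gamma), \qquad |R(x,\gamma)| < \epsilon \text{ on } V\times C .
\]
The first sum is finite with continuous terms, so it is continuous. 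Hence $|Bf(x,\gamma)-Bf(x_0,\gamma_0)|$ is at most the change in the finite sum plus $2\epsilon$, and is smaller than $3\epsilon$ close to $(x_0,\gamma_0)$.

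For $\Delta Bf$ I would do the same at a point $(x_0,\gamma_0,\tgamma_0)$, using a compact set $C$ that is a neighbourhood of both $\gamma_0$ and $\tgamma_0$, for instance the union of two compact neighbourhoods. Each term satisfies $|b_i(\gamma) f_i(x,\gamma) - b_i(\tgamma) f_i(x,\tgamma)| \leq M\,(b_i(\gamma)+b_i(\tgamma))$, so the tail beyond $T_\epsilon$ is below $2\epsilon$. The finite part is a finite sum of absolute values of continuous functions and is therefore continuous, so the same $3\epsilon$-type estimate finishes the argument. Since each partial sum is continuous and the tails go to zero uniformly on a neighbourhood, both series converge locally uniformly, so the limits are continuous.

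The main obstacle is the hypothesis on the $f_i$. The argument needs a single bound $M$ that holds uniformly in $i$ and locally uniformly in $(x,\gamma)$, not just continuity of each $f_i$ separately. I would make this interpretation of ``locally bounded family'' explicit, in line with how condition~\ref{cond:plocalbound} is phrased, and check that the intended application, with functions like $f_i(x,\gamma) = \ind_A(s_i(x))$ or differences of such functions, satisfies it, for instance because these functions are bounded by $1$.
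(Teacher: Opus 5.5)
Your proof is correct and is the paper's own argument written out in full: you apply Proposition~\ref{prop:tightness} on $\N$ with the counting measure and density $i \mapsto b_i(\gamma)$ to get the uniform tail bound, then run the truncation argument of Corollary~\ref{coro:tightc0} jointly in $(x,\gamma)$, using a compact neighbourhood of both $\gamma_0$ and $\tilde{\gamma}_0$ for $\Delta B f$, to get continuity of $Bf$ and $\Delta Bf$. Your reading of ``locally bounded family'' as a bound uniform in $i$ and locally uniform in $(x,\gamma)$ is exactly what the paper needs; only your closing aside is off, since indicators $\ind_A(s_i(x))$ are not continuous and are not what the corollary is used for (the paper applies it with $f \equiv 1$ in Lemma~\ref{lemma:deltac0}, and with $f_i = \acp(s^t_i(x), y)$, bounded via condition~\ref{cond:plocalbound}, in Lemma~\ref{lemma:ptc0}).
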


\begin{proof}
	This is a direct application of proposition~\ref{prop:tightness} and corollary~\ref{coro:tightc0}: let $\mu_\gamma$ denote the measure $I \subset \N \mapsto \sum_{i \in I} b_i(\gamma)$. By proposition~\ref{prop:tightness}, taking $\xmeasure$ in the proposition as the counting measure, for any compact $C$ of $\Gamma$, the family $(\mu_\gamma)_{\gamma \in C}$ is tight. The  corollary follows, using the same reasoning as in  corollary~\ref{coro:tightc0} (and using continuity over both $x$ and $\gamma$).
\end{proof}

We can now show the continuity of the total variation $\|P_\theta^t(x, \cdot) - \pi\|$ with respect to $x$ and $\paramk$.

\begin{lemma} \label{lemma:tvc0}
	Consider an adaptive MCMC algorithm targeting distribution $\pi$, for which condition~\ref{cond:ptcontinuity} holds. Then for any $t \geq 1$, the total variation $(x, \theta) \mapsto
	\|P^t_{\paramk}(x, \cdot)  - \pi \|$ is jointly continuous in both $x$ and $\paramk$.
\end{lemma}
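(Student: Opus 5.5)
The plan is to write the total variation explicitly through the Lebesgue decomposition $P^t_\theta(x,\cdot) = B^t_\theta(x,\cdot) + \tilde P^t_\theta(x,\cdot)$. Here $B^t_\theta(x,\cdot)$ is singular with respect to $\lambda$, while both $\tilde P^t_\theta(x,\cdot)$ and $\pi$ are absolutely continuous with respect to $\lambda$, with densities $\tilde p^t_\theta(x,\cdot)$ and $g_\pi$. Let $N$ be a $\lambda$-null set carrying $B^t_\theta(x,\cdot)$. The supremum over $A\in\Sigma$ of $|P^t_\theta(x,A)-\pi(A)|$ is attained by choosing $A = N \cup \{y : \tilde p^t_\theta(x,y) \ge g_\pi(y)\}$, or its complement. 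This gives the identity
\begin{equation*}
\|P^t_\theta(x,\cdot)-\pi\| = \tfrac12\Bigl( B^t_\theta(x,\mathcal{X}) + \int_{\mathcal{X}} |\tilde p^t_\theta(x,y) - g_\pi(y)|\,\lambda(dy) \Bigr),
\end{equation*}
since both measures have total mass one. Equivalently, $\|P^t_\theta(x,\cdot)-\pi\| = 1 - \int \min(\tilde p^t_\theta(x,y), g_\pi(y))\,\lambda(dy)$. Joint continuity in $(x,\theta)$ therefore reduces to the continuity of two terms. The first is $B^t_\theta(x,\mathcal{X}) = 1 - \tilde P^t_\theta(x,\mathcal{X})$, which is continuous directly by condition~\ref{cond:ptcontinuity}. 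The second is the integral $\int |\tilde p^t_\theta(x,y) - g_\pi(y)|\,\lambda(dy)$, or equivalently $\int \min(\tilde p^t_\theta(x,y), g_\pi(y))\,\lambda(dy)$.

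For the integral term, I apply Proposition~\ref{prop:tightness} with $\Gamma = \mathcal{X}\times\Theta$ and $\mu_\gamma = \tilde P^t_\theta(x,\cdot)$ for $\gamma = (x,\theta)$. Condition~\ref{cond:ptcontinuity} supplies both the continuity of the density $\tilde p^t$ and the continuity of the total mass. The proposition then shows that, for any compact neighbourhood $C$ of a point $\gamma_0$, the family $(\tilde P^t_\theta(x,\cdot))_{(x,\theta)\in C}$ is tight. This uses the standing topological assumptions implicit in the paper: $\mathcal{X}$ is $\sigma$-locally compact with $\lambda$ locally finite, and $\mathcal{X}\times\Theta$ is locally compact Hausdorff. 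The measure $\pi$ is a single probability measure, so it is tight on its own.

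Now fix $\epsilon>0$ and choose a compact $K$ with $\tilde P^t_\theta(x,K^c) < \epsilon$ for all $(x,\theta)\in C$. Since $\min(a,b)\le a$, the contribution of $K^c$ to $\int \min(\tilde p^t_\theta(x,\cdot), g_\pi)\,d\lambda$ is at most $\epsilon$, uniformly on $C$. On $K$, the integrand $\min(\tilde p^t_\theta(x,y), g_\pi(y))$ is continuous in $(x,\theta)$ for each fixed $y$. It is dominated by $\sup_{(x,\theta,y)\in C\times K} \tilde p^t_\theta(x,y)$, which is finite because $\tilde p^t$ is jointly continuous on the compact set $C\times K$, and $\lambda(K)<\infty$. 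Lebesgue's dominated convergence theorem then gives continuity of the integral over $K$. An $\epsilon/3$ argument combines the two pieces, exactly as in the proof of Corollary~\ref{coro:tightc0}. One could also invoke Corollary~\ref{coro:tightc0} more or less directly, taking $f = \min(1, g_\pi/\tilde p^t)$, but the direct argument avoids division by zero.

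The main obstacle is the first step: justifying the decomposition formula for the total variation in the presence of the singular part. In particular, one must check that the singular part and the absolutely continuous difference can be separated by a $\lambda$-null set, so that their masses add. The tightness step, by contrast, is essentially already done by Proposition~\ref{prop:tightness}. Working with the $\min$ form sidesteps any need for an integrable bound on $g_\pi$ on $K^c$, since only the tightness of $\tilde P^t$ is needed there.
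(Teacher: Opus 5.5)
Your proof is correct and takes essentially the paper's route: the same formula $\|P^t_\theta(x,\cdot)-\pi\| = \frac{1}{2}\bigl(B^t_\theta(x,\mathcal{X}) + \int|\tilde p^t_\theta(x,\cdot)-g_\pi|\,d\lambda\bigr)$, the same local tightness of $(\tilde P^t_\theta(x,\cdot))_{(x,\theta)\in C}$ from Proposition~\ref{prop:tightness}, and the same split over a compact $K$ with dominated convergence on $K$. The one difference is cosmetic: you use the equivalent form $1-\int\min(\tilde p^t_\theta(x,\cdot),g_\pi)\,d\lambda$, so the tail over $K^c$ is controlled by the tightness of $\tilde P^t$ alone, whereas the paper also chooses $K$ large enough that $\pi(K^c)$ is small.
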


\begin{proof}
	As $\pi$ is absolutely continuous with respect to $\xmeasure$, the total variation $\|P^t_\theta(x, \cdot)  - \pi \|$ can be written as
	\begin{align*}
		\|P^t_\theta(x, \cdot)  - \pi \| &= \frac{1}{2} \left( B^t_\theta(x, \xdom)  + \int |\tilde{p}^t_\theta(x, y) - \gpi(y)|\xmeasure(dy)  \right) \enspace .
	\end{align*}
	By condition~\ref{cond:ptcontinuity}, $B^t_\theta(x, \xdom)$ which equals $1 - \tilde{P}^t_\theta(x, \xdom)$ and $\tilde{p}^t_\theta(x, y)$ are jointly continuous. By proposition~\ref{prop:tightness}, for any compact $C$ of $\xdom \times \Theta$ the family $(\tilde{P}_\theta(x, \cdot))_{(x,\theta)\in C}$ is tight, so for any $\epsilon>0$ we can find a compact $K_\epsilon$ of $\xdom$ for which $\tilde{P}_\theta(x, K_\epsilon^c) < \epsilon/2$ for all $(x,\theta) \in C$. The compact $K_\epsilon$ can be taken large enough to ensure that $\pi(K_\epsilon^c) < \epsilon/2$ also. Thus, as in corollary~\ref{coro:tightc0}, $\int |\tilde{p}^t_\theta(x, \cdot) - \gpi|d\xmeasure $ can be divided with $K_\epsilon$ into a continuous function and a quantity lower than $\epsilon$ for any $(x,\theta) \in C$, implying continuity.	
\end{proof}

Similarly, for any compact $\Cx$ of $\xdom$ the $\sup_{x \in \Cx} \|P_\theta(x, \cdot) - P_\ttheta(x, \cdot)\|$ can be shown to be a continuous function of $\theta$ and $\ttheta$.
\begin{lemma} \label{lemma:deltac0}
	Suppose that condition~\ref{cond:pcontinuity} holds. Then the function $\Delta \tilde{P} : (x, \tilde{x}, \theta, \ttheta ) \mapsto \int |\acp(x, y) - \acp[\ttheta](\tilde{x}, y)|\xmeasure(dy)$ is continuous. If additionally, condition~\ref{cond:bdiscrete} holds, then the function $\Delta B : (x, \tilde{x}, \theta, \ttheta) \mapsto \sum_i | b_i(x,\theta) - b_i(\tilde{x}, \ttheta)|$ is continuous. Furthermore, for any compact subset $\Cx$ of $\xdom$, the function $V_{\Cx}: (\theta, \ttheta )  \mapsto \sup_{x \in \Cx} \|P_\theta(x, \cdot) - P_\ttheta(x, \cdot)\|$ is continuous.
\end{lemma}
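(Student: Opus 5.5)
For the first claim, I would treat $(x,\theta)$ as a single parameter $\gamma\in\Gamma:=\xdom\times\Theta$. By condition~\ref{cond:pcontinuity}, the density $(x,\theta,y)\mapsto\acp(x,y)$ is jointly continuous, and so is the total mass $(x,\theta)\mapsto\ackern[\xdom]$. Proposition~\ref{prop:tightness} then shows that $(\ack(x,\cdot))_{(x,\theta)\in C}$ is tight for every compact $C\subset\xdom\times\Theta$.

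Corollary~\ref{coro:tightc0}, applied with $f\equiv 1$, shows that $\Delta\mu f$ is continuous on $\Gamma\times\Gamma$. Written out, this is exactly
\[
(x,\theta,\tilde x,\ttheta)\mapsto\int|\acp(x,y)-\acp[\ttheta](\tilde x,y)|\xmeasure(dy),
\]
so $\Delta\tilde P$ is continuous. Concretely, fix a compact neighbourhood $C$ of the base point and choose $K_\epsilon$ with $\ack(x,K_\epsilon^c)<\epsilon$ on $C$. On $K_\epsilon$ dominated convergence applies, since the joint continuity of the densities gives a bound on $K_\epsilon\times C$. Off $K_\epsilon$ the integral is at most $2\epsilon$.

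For $\Delta B$, I would use condition~\ref{cond:bdiscrete}: the weights $b_i$ are continuous in $(x,\theta)$, and the total mass $\B[\xdom]=1-\ackern[\xdom]$ is continuous. Corollary~\ref{coro:tightbc0}, again with $\gamma=(x,\theta)$ and $f_i\equiv 1$, gives two things. First, for every compact $C$ the tails $\sum_{i\ge T_\epsilon}b_i$ are uniformly small on $C$. Second, $\sum_i|b_i(\gamma)-b_i(\tilde\gamma)|$ is continuous: the finite head is continuous, and the tail contributes at most $2\epsilon$.

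For $V_{\Cx}$, I would first write the total variation explicitly. The atoms $s_i(x)$ do not depend on $\theta$, so $B_\theta(x,\cdot)$ and $B_\ttheta(x,\cdot)$ charge the same points. The singular parts are mutually singular with respect to the absolutely continuous parts. This gives
\[
\|P_\theta(x,\cdot)-P_\ttheta(x,\cdot)\|\le \tfrac12\Big(\Delta\tilde P(x,x,\theta,\ttheta)+\Delta B(x,x,\theta,\ttheta)\Big).
\]
When the $s_i(x)$ are distinct this is an equality. If several atoms coincide, the difference of the singular parts is $\sum_{z}|\sum_{i:s_i(x)=z}(b_i(x,\theta)-b_i(x,\ttheta))|$, which is not a continuous expression.

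This is the step I expect to be the main obstacle. My plan is to avoid needing an exact formula for the norm and use only continuity estimates. The triangle inequality gives
\[
\big|\|P_\theta(x,\cdot)-P_\ttheta(x,\cdot)\|-\|P_{\theta'}(x,\cdot)-P_{\ttheta'}(x,\cdot)\|\big|\le \|P_\theta(x,\cdot)-P_{\theta'}(x,\cdot)\|+\|P_\ttheta(x,\cdot)-P_{\ttheta'}(x,\cdot)\|.
\]
Each term on the right is at most $\tfrac12(\Delta\tilde P+\Delta B)(x,x,\theta,\theta')$, respectively the same with $\ttheta,\ttheta'$. This bound is valid regardless of whether atoms coincide, because the atoms are shared.

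Now $(x,\theta,\theta')\mapsto(\Delta\tilde P+\Delta B)(x,x,\theta,\theta')$ is continuous and vanishes on the diagonal $\theta=\theta'$. Since $\Cx$ is compact, a standard tube-lemma argument shows that $\sup_{x\in\Cx}(\Delta\tilde P+\Delta B)(x,x,\theta,\theta')\to0$ as $\theta'\to\theta$. This uniform bound makes the family $x\mapsto\|P_\theta(x,\cdot)-P_\ttheta(x,\cdot)\|$ equicontinuous in $(\theta,\ttheta)$, uniformly over $x\in\Cx$. Taking the supremum over $x\in\Cx$ preserves the estimate, so $V_{\Cx}$ is continuous.
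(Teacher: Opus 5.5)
Your proof is correct. For $\Delta\tilde P$ and $\Delta B$ it is the paper's argument (Corollaries~\ref{coro:tightc0} and~\ref{coro:tightbc0} with $f\equiv 1$ and $\gamma=(x,\theta)$). For $V_{\Cx}$ you take a different route.

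The paper asserts the identity $\|P_\theta(x,\cdot)-P_{\tilde\theta}(x,\cdot)\|=\frac12\bigl(\Delta B(x,x,\theta,\tilde\theta)+\Delta\tilde P(x,x,\theta,\tilde\theta)\bigr)$. From it, it concludes that the total variation is jointly continuous in $(x,\theta,\tilde\theta)$, and then invokes Lemma~\ref{lemma:supc0} on suprema over a compact. You keep only the inequality $\le$, which holds because the atoms are shared. You combine it with the triangle inequality and a tube-lemma argument to get a modulus of continuity in $(\theta,\tilde\theta)$ that is uniform over $x\in\Cx$, and that is all the supremum needs.

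The paper's route, when valid, gives more: continuity in $x$ as well. But its identity presupposes that the atoms $s_i(x)$ are pairwise distinct, which condition~\ref{cond:bdiscrete} does not require. Take $s_1(x)=x$ and $s_2\equiv a$, as permitted by condition~\ref{cond:bsupcond}, together with:
\begin{itemize}
\item constant weights with $b_1(\theta)+b_2(\theta)=b_1(\tilde\theta)+b_2(\tilde\theta)$ and $b_1(\theta)\neq b_1(\tilde\theta)$;
\item equal absolutely continuous parts for $\theta$ and $\tilde\theta$.
\end{itemize}
Then the distance equals $|b_1(\theta)-b_1(\tilde\theta)|>0$ for $x\neq a$ but vanishes at $x=a$. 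So the identity fails there, and continuity in $x$ genuinely fails.

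Your argument is therefore the more robust one. It covers condition~\ref{cond:bdiscrete} as stated, needs no regularity in $x$, and dispenses with Lemma~\ref{lemma:supc0}. It still delivers exactly the continuity of $V_{\Cx}$ used in Proposition~\ref{prop:dacompact}.
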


\begin{proof}
	The continuity of $\Delta \tilde{P}$ and $\Delta B$ follow from corollary~\ref{coro:tightc0} and corollary~\ref{coro:tightbc0} (taking $f \equiv 1$). 
	As $s_i$ does not depend on $\theta$, the mass of $B_\theta(x, \cdot)$ and $B_\ttheta(x, \cdot)$ is on the same points and consequently
	\begin{align*}
		\|P_\theta(x, \cdot) - P_\ttheta(x, \cdot)\| = \frac{1}{2}\left(  \Delta B(x, x, \theta, \ttheta) + \Delta \tilde{P}(x,x, \theta, \ttheta) \right) \enspace ,
	\end{align*}
	which is therefore continuous. The continuity of $V_{\Cx}$ follows as taking the $\sup$ over a compact preserves continuity (see lemma~6 from the appendix).
\end{proof}

The next proposition is useful to avoid showing the continuity of $\acpt(x,y)$ for all $t > 1$. 

\begin{lemma} \label{lemma:ptc0}
	Suppose that conditions \ref{cond:pcontinuity}, \ref{cond:plocalbound} and \ref{cond:bsupcond} hold, then condition~\ref{cond:ptcontinuity} holds.
\end{lemma}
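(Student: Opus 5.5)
I will argue by induction on $t$, using the Chapman--Kolmogorov relation $P^{t+1}_\theta(x,\cdot) = \int P_\theta(x,dz) P^t_\theta(z,\cdot)$ and decomposing each factor into its absolutely continuous and singular parts. The base case $t=1$ is exactly condition~\ref{cond:pcontinuity}. For the inductive step, assume that $(x,\theta,y)\mapsto \acpt(x,y)$ and $(x,\theta)\mapsto \ackernt[\xdom]$ are continuous. Write $B_\theta(x,\cdot)=\sum_i b_i(x,\theta)\dirac{s_i(x)}$. Then the absolutely continuous part of $P^{t+1}_\theta(x,\cdot)$ has density
\begin{equation*}
\acpt[\theta]^{\,t+1}(x,y) = \int \acp(x,z)\,\acpt(z,y)\,\xmeasure(dz) + \int \acpt(x,z)\,\acp(z,y)\,\xmeasure(dz)\,\Big/\!\!\!\!\quad \text{(overlap removed)} + \sum_i b_i(x,\theta)\,\acpt(s_i(x),y) ,
\end{equation*}
More carefully, I will use $P^{t+1}=P^t P$ for one term and $P P^t$ for the other, so that the density splits into three pieces: the term $\int \acp(x,z)\acpt(z,y)\xmeasure(dz)$, coming from the absolutely continuous first step; the term $\sum_i b_i(x,\theta)\acpt(s_i(x),y)$, coming from an atom at the first step followed by an absolutely continuous part; and, to bookkeep atoms, the fact that the singular part of $P^{t+1}$ is $\sum_{i} b_i(x,\theta) B^t_\theta(s_i(x),\cdot)$. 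Under \ref{cond:bsupcond} this sum remains discrete, with atoms in $\{x\}\cup\{a_i\}$: compositions of the identity and constants stay in that set. So the absolutely continuous part is exactly the first two pieces.

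The continuity of the atomic piece $\sum_i b_i(x,\theta)\acpt(s_i(x),y)$ follows from corollary~\ref{coro:tightbc0}, with $\Gamma=\xdom\times\Theta$ and $f_i(\cdot)=\acpt(s_i(x),y)$. These functions are continuous, since $s_i$ is either constant or the identity. I need them to be locally bounded; I will obtain this by propagating, inductively, a local bound of type \ref{cond:plocalbound} for $\acpt$ (uniform in the starting point). That propagation goes through because $\int\acp(x,z)\acpt(z,y)dz\le \sup_z\acpt(z,y)$ and $\sum_i b_i\le 1$. The masses $b_i$ and the total singular mass $1-\ackern[\xdom]$ are continuous by \ref{cond:bdiscrete} and \ref{cond:pcontinuity}.

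The main obstacle is the integral piece $\int \acp(x,z)\acpt(z,y)\xmeasure(dz)$, viewed as a function of the parameter $(x,\theta,y)$. Here I will apply corollary~\ref{coro:tightc0} with $\gamma=(x,\theta,y)$, $h_\gamma(z)=\acp(x,z)$, and $f(z,\gamma)=\acpt(z,y)$. Tightness of $(\ackern)_{(x,\theta)\in C}$ on compacts comes from proposition~\ref{prop:tightness}, since the density and the total mass are continuous by \ref{cond:pcontinuity}. Continuity of $f$ in $\gamma$ is the induction hypothesis. Local boundedness of $(f(z,\gamma))_z$ is exactly the propagated version of \ref{cond:plocalbound}, which is why that condition is needed: without a bound uniform in $z$, dominated convergence fails on the tail of $z$.

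Finally, I will obtain continuity of the total mass $\ackernt[\xdom]$ at step $t+1$. Under \ref{cond:bsupcond}, the singular mass is $\sum_i b_i(x,\theta)\,(1-\tilde P^t_\theta(s_i(x),\xdom))$. This is continuous by the discrete part of corollary~\ref{coro:tightbc0}, with the bounded continuous $f_i$ given by the induction hypothesis. Taking one minus this gives continuity of the absolutely continuous mass, which closes the induction and yields \ref{cond:ptcontinuity}.
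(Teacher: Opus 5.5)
Your expansion of $P^{t+1}_\theta=P_\theta P^t_\theta$ omits one of the four cross terms, namely
\[
\int \tilde p_\theta(x,z)\,B^t_\theta(z,dy)\,\lambda(dz),
\]
that is, an absolutely continuous first step followed by the singular part of the remaining $t$ steps. Write $B^t_\theta(z,\cdot)=\sum_j b^t_j(z,\theta)\delta_{s^t_j(z)}$. The atoms with $s^t_j(z)=z$ make this term absolutely continuous, with density $\tilde p_\theta(x,y)\sum_{j:\,s^t_j=\mathrm{id}} b^t_j(y,\theta)$. Each constant atom $s^t_j\equiv a_j$ contributes a Dirac at $a_j$ with weight $\int \tilde p_\theta(x,z)\,b^t_j(z,\theta)\,\lambda(dz)$. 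Already for Metropolis--Hastings with $t=1$, this is the ``accept, then reject'' path, with density $\tilde p_\theta(x,y)\bigl(1-a(y,\theta)\bigr)$. So three of your claims fail. The absolutely continuous part is not just your first two pieces. Your singular part $\sum_i b_i(x,\theta)B^t_\theta(s_i(x),\cdot)$ is incomplete. Your final mass formula is wrong: with your accounting, the total mass of $P^{t+1}_\theta(x,\cdot)$ comes out as $1-\int\tilde p_\theta(x,z)\,B^t_\theta(z,\mathcal{X})\,\lambda(dz)$, which is $<1$ in general.

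The repair is more than bookkeeping. To prove continuity of the new contributions, you need the individual weights $b^t_j(z,\theta)$, sorted by the type of their atom, to be continuous in $(z,\theta)$. Your inductive hypothesis controls the singular part of $P^t_\theta$ only through its total mass $1-\tilde P^t_\theta(z,\mathcal{X})$. That suffices only when every atom is of identity type, as in plain Metropolis--Hastings. You therefore have to add to the induction hypothesis the representation of $B^t_\theta$ as a countable sum of Diracs with continuous weights, each $s^t_j$ being the identity or a constant, and show that this propagates. You remark on the atom structure but never put the continuity of the weights into the hypothesis. The paper's property $E_t$ does exactly this.

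The paper also expands in the opposite order, $P^{t+1}_\theta=P^t_\theta P_\theta$. Then the delicate cross term $\tilde P^t_\theta B_\theta$ involves only the one-step atoms, whose type is given directly by \ref{cond:bsupcond}. Likewise, in $\int\tilde p^t_\theta(x,z)\tilde p_\theta(z,y)\lambda(dz)$ the integrand handled by Corollary~\ref{coro:tightc0} is the one-step density. Its local boundedness is \ref{cond:plocalbound} itself, so no bound on $\tilde p^t_\theta$ has to be propagated. Your propagated bound would still survive the repair, since the missing density is at most $\tilde p_\theta(x,y)$.
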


\newcommand{\ktack}{\tilde{P}_\theta^t\tilde{P}_\theta(x, \cdot)}
\begin{proof}
	Let $E_t$ be the property that the function $\tilde{p}^t: \theta, x, y \mapsto \acpt(x,y)$ and the function $(x,\theta) \mapsto \tilde{P}^t_\theta(x, \xdom)$ are continuous, and that $B^t_\theta(x,dy)$ can be written as a countable sum of Dirac distributions with points $s^t_i(x)$ and non-negative weights $b^t_i(x,\theta)$, $\sum_{i\in \N} b_i^t(x,\theta)\dirac{s^t_i(x)}(dy)$, such that for any $i\in\N$, $b_i^t$ and $s^t_i$ are continuous functions. By hypothesis, $E_1$ is verified. Suppose that $E_t$ holds for some $t \geq 1$. 
	Developing $P_\theta^{t+1}$,
	\begin{align*}
		P_\theta^{t+1}(x, dy) &= \int P_\theta^{t}(x, dz) P_\theta(z, dy) \\
		&= \tilde{P}_\theta^t \tilde{P}_\theta(x, dy) + \tilde{P}_\theta^t B_\theta(x, dy) + B_\theta^t \tilde{P}_\theta(x, dy) + B_\theta^t B_\theta(x, dy) \enspace , 
	\end{align*}
	where
	\begin{align*}
		\tilde{P}_\theta^t \tilde{P}_\theta(x, dy) &= \int \acpt(x, z) \acp(z,y) \xmeasure(dz)\xmeasure(dy) \enspace , \\
		\tilde{P}_\theta^t B_\theta(x, dy) &= \int \acpt(x, z) \sum_i b_i(z,\theta) \dirac{s_i(z)}(dy) \xmeasure(dz) \enspace , \\ 
		B_\theta^t \tilde{P}_\theta(x, dy) &= \int \sum_i b^t_i(x, \theta)\dirac{s^t_i(x)}(dz) \acp(z,y)\xmeasure(dy) \enspace , \\
		&= \sum_i b^t_i(x, \theta) \acp(s^t_i(x),y)\xmeasure(dy) \enspace ,\\
		B_\theta^t B_\theta(x, dy) &= \int \sum_i b^t_i(x, \theta)\dirac{s^t_i(x)}(dz) \sum_j b_j(z,\theta)\dirac{s_j(z)}(dy) \enspace , \\ 
		&= \sum_i \sum_j b^t_i(x,\theta) b_j(s^t_i(x),\theta) \dirac{s_j(s^t_i(x))}(dy) \enspace . 
	\end{align*}
	The measure $\ktack$ is absolutely continuous with respect to $\xmeasure$, with density $\bar{f}_1: (\theta,x,y): \int \acpt(x,z) \xmeasure(dz) \acp(z,y)$. By corollary~\ref{coro:tightc0}, the family of functions $(\acp(x,y))_{x \in \xdom}$ being locally bounded by condition~\ref{cond:plocalbound}, $\bar{f}_1$ is continuous. The measure $B_\theta^t \tilde{P}_\theta(x, \cdot)$ is also absolutely continuous with respect to $\xmeasure$ with density $\bar{f}_2 :(\theta, x, y) \mapsto \sum_i b^t_i(x,\theta)\acp(s^t_i(x), y)$, which is continuous by corollary~\ref{coro:tightbc0}. 
	The measure $B_\theta^T B_\theta(x, \cdot)$  is the sum of countable Dirac measures, with continuous weights and modes.
	Finally, $\tilde{P}_\theta^T B_\theta(x, \cdot)$ can admit singular or absolutely continuous parts, depending on the mode functions $(s_i)_i$. Let $I_1$ and $I_2$ be a partition of $\N$ for which $s_i(x) = x$ for all $i \in I_1$, and for all $i \in I_2$ there exists $a_i \in \xdom$ such that $s_i(x) = a_i$. Denoting $\nu_i(x,\theta)$ the measure $A \in \xsalgebra \mapsto \int \acpt(x,z)b_i(z,\theta) \xmeasure(dz) \dirac{s_i(z)}(A)$, for $i \in I_1$, $\nu_i(x,\theta)(A)$ equals $\int_A \acpt(x,z) b_i(z,\theta) \xmeasure(dz)$. and so is absolutely continuous with density $f_{3,i}: (\theta,x,y) = \acpt(x,y) b_i(y,\theta)$. The full measure on $I_1$ has density $\bar{f}_3 : (\theta,x,y) \mapsto \sum_{i \in I_1} \acpt(x,y) b_i(y, \theta)$, which is continuous by corollary~\ref{coro:tightbc0}.
	For $i \in I_2$, $\nu_i(x,\theta)(A) =  \int \tilde{p}^t_\theta(x,z) b_i(z,\theta) \xmeasure(dz) \ind_A(a_i)$, so is a Dirac in $a_i$ with weight $g_i(x,\theta) \mapsto \int \tilde{p}^t_\theta(x,z) b_i(z,\theta)\xmeasure(dz)$. As $b_i(z,\theta) \leq 1$ for all $i$, $z$ or $\theta$, we can apply corollary~\ref{coro:tightc0} to show that $g_i$ is a continuous function of $x$ and $\theta$. 
	
	Regrouping everything, $P^{t+1}_\theta(x,\cdot)$ can be decomposed into a countable sum of Dirac measures with continuous weights and modes
	\begin{align*}
		B^{t+1}_\theta(x, dy) = \sum_i \sum_j b^t_i(x,\theta) b_j(s^t_i(x),\theta) \dirac{s_j(s^t_i(x))}(dy) + \sum_{i \in I_2} g_i(x,\theta) \dirac{a_i}(dy) \enspace ,
	\end{align*}
	and an absolutely continuous part $\tilde{P}_\theta^{t+1}(x,\cdot)$ with continuous density $\bar{f}_1 + \bar{f}_2 + \bar{f}_3$. All that remains to show that $E_{t+1}$ holds is that the function $(x,\theta) \mapsto \tilde{P}_\theta^{t+1}(x,\xdom)$ is continuous. Let $F_i$ denote the function $(\theta, x) \mapsto \int \bar{f}_i(\theta, x, y) \xmeasure(dy)$. So 
	$$(F_1 + F_3)(x, \theta) = \int \acpt(x,z) \left(\tilde{P}_\theta(z, \xdom) + \sum_{i \in I_1} b_i(y, \theta)\right) \xmeasure(dz)\enspace .$$
	By hypothesis and by corollary~\ref{coro:tightbc0}, $\tilde{P}_\theta(x, \xdom)$ and $\sum_{i \in I_1} b_i(x, \theta)$ are continuous functions, and as they are bounded by $1$, the function $F_1 + F_3$ is continuous by corollary~\ref{coro:tightc0}. Similarly, $F_2$ is $\sum_i b_i^t(x, \theta) P_\theta(s_i(x), \xdom)$ and is continuous by corollary~\ref{coro:tightbc0}.
\end{proof}

Finally, a Metropolis-Hastings algorithms which fit condition~\ref{cond:qcontinuity} also satisfies our conditions for continuity, as shown in the following lemma.
\begin{lemma}	\label{lemma:mhc0}
	Consider an adaptive Metropolis-Hastings for which \ref{cond:qcontinuity} holds. Then conditions \ref{cond:ptcontinuity} and \ref{cond:bdiscrete} also hold.
\end{lemma}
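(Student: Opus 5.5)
The plan is to reduce to Lemma~\ref{lemma:ptc0}: I will check that an adaptive Metropolis-Hastings satisfying \ref{cond:qcontinuity} also satisfies \ref{cond:pcontinuity}, \ref{cond:plocalbound} and \ref{cond:bsupcond}. Lemma~\ref{lemma:ptc0} then gives \ref{cond:ptcontinuity}, and \ref{cond:bdiscrete} is part of \ref{cond:bsupcond}.

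\emph{Decomposition.} From \eqref{eq:ptheta}, the absolutely continuous part of $P_\theta(x,\cdot)$ has density
\[
\acp(x,y)=\alpha_\theta(x,y)\qp(x,y)=\min\left(\qp(x,y),\frac{\gpi(y)\qp(y,x)}{\gpi(x)}\right).
\]
The singular part is the single atom $(1-a(x,\theta))\dirac{x}$ (after absorbing the Lebesgue-null diagonal into the atom if needed). So there is one index, with $s_0(x)=x$ and $b_0(x,\theta)=1-a(x,\theta)=1-\ackern[\xdom]$. This is exactly the form of \ref{cond:bsupcond}, provided $b_0$ is continuous, which follows once $\ackern[\xdom]$ is.

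\emph{Continuity of the density.} Where $\gpi(x)>0$, the second form above shows $\acp(x,y)$ is jointly continuous, since $\gpi$ and $q$ are continuous. Where $\gpi(x)=0$, the usual convention is $\alpha=1$. I expect this degenerate region to need care. The simplest route is to assume, or restrict to, $\{\gpi>0\}$, which is open and so still locally compact; the chain never enters $\{\gpi=0\}$ from a point where $\gpi>0$. Alternatively one can write the density as $\qp(x,y)\min(1,r)$ and treat the boundary separately.

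\emph{Local boundedness.} Since $\acp\le \qp$, local boundedness of $(\acp(x,y))_{x}$ in $(\theta,y)$ follows directly from that of $(\qp(x,y))_x$. This gives \ref{cond:plocalbound}.

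\emph{Continuity of the total mass.} The main obstacle is continuity of $(x,\theta)\mapsto a(x,\theta)=\int \acp(x,y)\xmeasure(dy)$. I will use Proposition~\ref{prop:tightness} with $\gamma=(x,\theta)$ and $h_\gamma=\qp(x,\cdot)$, whose total mass is identically $1$ and hence continuous. This shows $(\Qp(x,\cdot))_{(x,\theta)\in C}$ is tight on every compact $C$. The family $\tilde{P}_\theta(x,\cdot)$ is dominated by it, so it is tight as well.

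Given this tightness, split the integral over a compact $K_\epsilon$ and its complement. On $K_\epsilon$, the integrand $\acp(x,y)$ is jointly continuous and bounded by a local bound of $q$ uniformly near $(x_0,\theta_0)$. Compactness of $K_\epsilon$ lets me cover it with finitely many neighbourhoods from \ref{cond:plocalbound}, so dominated convergence applies. The complement contributes less than $\epsilon$. This is the argument of Corollary~\ref{coro:tightc0} with $f=\alpha_\theta$, bounded by $1$, except that $\alpha$ is continuous only where $\gpi>0$; working with $\acp$ directly avoids that issue.

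Hence $a$ and $b_0$ are continuous, so \ref{cond:pcontinuity} and \ref{cond:bsupcond} hold. Lemma~\ref{lemma:ptc0} then concludes.
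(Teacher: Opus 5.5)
Your proposal follows the paper's proof. Both reduce to Lemma~\ref{lemma:ptc0} by checking three things: \ref{cond:pcontinuity}, meaning continuity of $\tilde{p}_\theta=\alpha_\theta q_\theta$ and, via the tightness argument of Proposition~\ref{prop:tightness} and Corollary~\ref{coro:tightc0}, continuity of $a(x,\theta)$; \ref{cond:plocalbound} from $\alpha_\theta\le 1$; and \ref{cond:bsupcond} with the single atom $(1-a(x,\theta))\dirac{x}$. Your extra care about the set $\{\gpi=0\}$, and your choice to integrate $\tilde{p}_\theta$ directly rather than plugging $f=\alpha_\theta$ into Corollary~\ref{coro:tightc0}, are warranted refinements: where $\gpi$ vanishes, $\alpha_\theta$ and even $\tilde{p}_\theta$ can be discontinuous, a point the paper's one-line continuity claim passes over, so assuming $\gpi>0$ or restricting to the open set $\{\gpi>0\}$ is the right fix.
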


\begin{proof}
	As detailed in Eq.~\eqref{eq:ptheta}, the kernel of a Metropolis-Hastings algorithm can be decomposed with respect to $\xmeasure$ into an absolutely continuous part with density $\tilde{p}_\theta(x,y)$ equal to $\alpha_\theta(x,y)q_\theta(x,y)$, and a singular part $(1-a(x,\theta))\dirac{x}$. By  lemma~\ref{lemma:ptc0}, it suffices to show that conditions \ref{cond:pcontinuity}, \ref{cond:plocalbound} and \ref{cond:bsupcond} hold: $\tilde{p}$ is continuous by continuity of $g_\pi$ and $q$;  $(\tilde{p}_\theta(x,y))_{x \in \xdom}$ is a locally bounded family as $(q_\theta(x,y))_{x \in \xdom}$ is a locally bounded family, and that $\alpha_\theta(x,y) \leq 1$; finally, by corollary~\ref{coro:tightc0}, $a(x, \theta) = \int \alpha_\theta(x, y)q_\theta(x,y) \xmeasure(dy)$ is continuous (using again that $\alpha_\theta(x,y) \leq 1$), showing the continuity of $\tilde{P}_\theta(x,\xdom)$.
\end{proof}

\subsection{Containment}

The following proposition is an adaptation of \cite[Corollary~3]{roberts2007ergodicitymcmc} to our context, and shows that under our continuity assumptions, containment on the state and parameter space implies containment on kernel space.

\begin{proposition}	\label{proposition:containment}
	Consider an adaptative MCMC algorithm such that condition~\ref{cond:pergodic} and condition~\ref{cond:ptcontinuity} hold. Then the adaptive MCMC is ``locally'' simultaneously ergodic on compact sets: for any compact $C$ of $\xdom \times \Theta$ and any $\epsilon > 0$, there exists $T$ such that for all $t \geq T$ and for all $(x,\theta) \in C$, $\|P^t_\theta(x, \cdot) - \pi\| < \epsilon$. Furthermore, if condition~\ref{cond:containment_param} also holds, then condition~\ref{cond:probability_bound} holds.
\end{proposition}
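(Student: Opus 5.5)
The plan is to combine the pointwise ergodicity from condition~\ref{cond:pergodic} with the continuity from lemma~\ref{lemma:tvc0} through a compactness argument, in the style of Dini's theorem. The one extra ingredient needed is monotonicity in $t$ of $g_t(x,\theta) := \|P^t_\theta(x,\cdot) - \pi\|$. This follows from the invariance of $\pi$ under $P_\theta$, which is implied by the ergodicity in \ref{cond:pergodic}: if $\pi P_\theta^s \to \pi$ as $s\to\infty$ then $\pi P_\theta = \lim_s \pi P_\theta^{s+1} = \pi$. Writing
\[
P^{t+1}_\theta(x,\cdot) - \pi = \left(P^t_\theta(x,\cdot) - \pi\right)P_\theta ,
\]
and using that a Markov kernel contracts total variation, we get $g_{t+1}(x,\theta) \le g_t(x,\theta)$ for all $(x,\theta)$.

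The first claim then comes out of an open-cover argument. Fix a compact $C \subset \xdom\times\Theta$ and $\epsilon>0$. For each $(x,\theta)\in C$, condition~\ref{cond:pergodic} gives an integer $t_{(x,\theta)}$ with $g_{t_{(x,\theta)}}(x,\theta) < \epsilon$. By lemma~\ref{lemma:tvc0}, $g_{t_{(x,\theta)}}$ is jointly continuous, so the set
\[
U_{(x,\theta)} := \{(y,\ttheta) : g_{t_{(x,\theta)}}(y,\ttheta) < \epsilon\}
\]
is an open neighbourhood of $(x,\theta)$. These sets cover $C$, so finitely many suffice; call their indices $t_1,\dots,t_k$ and set $T := \max_j t_j$. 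Any $(y,\ttheta)\in C$ lies in some $U_j$. Monotonicity then gives, for every $t\ge T$,
\[
g_t(y,\ttheta) \le g_{t_j}(y,\ttheta) < \epsilon .
\]
Equivalently, this is Dini's theorem applied to the decreasing sequence of continuous functions $g_t$ converging pointwise to $0$ on the compact set $C$.

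For the second claim, assume \ref{cond:containment_param}. Given $\epsilon,\delta>0$, take the compact $C_\delta$ from \ref{cond:containment_param} and apply the first part to it, obtaining $T$. On the event $(\vxt,\vparamkt)\in C_\delta$ we have $g_T(\vxt,\vparamkt)<\epsilon$, hence $M_\epsilon(\vxt,\vparamkt)\le T < T+1 =: T_\epsilon$. This event has probability at least $1-\delta$ for every $t$, which is exactly \ref{cond:probability_bound}. Note that $M_\epsilon$ depends on the parameter only through $\theta$, since the kernel is $P_\theta$; the adaptation variable $\paramu$ plays no role here.

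I expect the main obstacle to be the monotonicity step. It requires checking carefully that $\pi$ is stationary for each $P_\theta$, via the limit argument above, and that the total-variation norm as defined in \eqref{eq:totalvariation} is nonincreasing under composition with a Markov kernel. Everything else is a routine compactness argument resting on lemma~\ref{lemma:tvc0}.
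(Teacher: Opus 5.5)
Your proof is correct and follows the paper's argument essentially step for step: you use the open sublevel sets of $\|P^t_\theta(x,\cdot)-\pi\|$ (open by lemma~\ref{lemma:tvc0}), extract a finite subcover of $C$, invoke monotonicity in $t$, and then bound $M_\epsilon$ on the event $\{(\vxt,\vparamkt)\in C_\delta\}$. The one difference is that you actually justify the monotonicity, via stationarity of $\pi$ deduced from pointwise ergodicity together with total-variation contraction under $P_\theta$, whereas the paper simply asserts it as a consequence of ergodicity.
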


\begin{proof}
	The uniform ergodicity on compact sets is easily derived from lemma~\ref{lemma:tvc0} and using the same reasoning as in the proof of \cite[Corollary~3]{roberts2007ergodicitymcmc}. It is detailed here for the sake of completeness.
	For $\epsilon > 0$ and $t \in \N$, let $f_t$ denote the function $(x, \theta) \mapsto \|P^t_\theta(x, \cdot) - \pi\|$, and let $W_{t, \epsilon}$ be the set $\lbrace (x,\theta) \in \xdom \times \Theta | f_k(x,\theta) < \epsilon \textrm{ for all } k\geq t \rbrace$. As every kernel $P_\theta^t$ is ergodic with respect to $\pi$ (condition~\ref{cond:pergodic}),  $f_t(x,\theta)$ is a decreasing function of $t$, so $W_{t, \epsilon}$ is $f^{-1}_t((-\infty, \epsilon))$, which is an open set by lemma~\ref{lemma:tvc0}. 
	By condition~\ref{cond:pergodic} again, $\xdom \times \Theta$ is covered by the open sets $\cup_{t \in \N} W_{t, \epsilon}$, and thus for any compact $C$ subset of $\xdom \times \Theta$, a finite cover $(W_{t_i, \epsilon})_{i \in I}$ of $C$ can be extracted. Therefore, denoting $T := \max_{i \in I} t_i$, for any $(x,\theta) \in C$ and any $t \geq T$,  $\|P^t_\theta(x, \cdot) - \pi\| < \epsilon$.
	
	Under condition~\ref{cond:containment_param}, condition \ref{cond:probability_bound} follows: take $\delta>0$, and $C_\delta$ a compact subset of $\xdom \times \Theta$ such that $\Pr((\vxt,\vparamkt) \in C_\delta) > 1 - \delta$ for all $t \in T$. Then, as previously showed, there exists $T_\delta$ such that for all $t \geq T_\delta$ and $(x,\theta) \in C_\delta$, $\|P^t_\theta(x, \cdot) - \pi\| < \epsilon$. Let $E_{\epsilon, t, \delta}$ and $H_{t,\delta}$ be respectively the events $\lbrace M_\epsilon(\vxt, \vparamkt) > T_\delta \rbrace$ and $\lbrace (\vxt, \vparamkt) \in C_\delta \rbrace$.
	Thus 
	\begin{align*}
		Pr(M_{\epsilon}(\vxt, \vparamkt) \geq T_\delta) &= \Pr(E_{\epsilon, t, \delta} \cap H_{t,\delta}) + \Pr(E_{\epsilon, t, \delta} \cap H_{t,\delta}^c) \\
		&\leq 0 + \Pr(H_{t,\delta}^c) \\
		&< \delta \enspace .
	\end{align*}
\end{proof}

\subsection{Diminishing Adaptation}

When $(\vparamkt)_t$ is bounded in probability, lemma~\ref{lemma:deltac0} allows to translate diminishing adaptation in parameter space into diminishing adaptation in kernel space, as formalized in the following proposition.
\begin{proposition}	\label{prop:dacompact}
	Consider an adaptive MCMC algorithm for which the sequence $(\vparamkt)_t$ is bounded in probability. Suppose also that condition~\ref{cond:da_param}, condition~\ref{cond:bdiscrete} and condition~\ref{cond:pcontinuity} hold, then condition~\ref{cond:diminishing_adaptation_Cx} follows.
\end{proposition}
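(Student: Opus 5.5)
The plan is to combine the continuity of $V_{\Cx}$ from lemma~\ref{lemma:deltac0} with a compactness argument, and so turn convergence in probability of $\dk(\vparamktt,\vparamkt)$ into convergence in probability of $\Dct = V_{\Cx}(\vparamkt, \vparamktt)$.

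Fix a compact $\Cx \subset \xdom$, together with $\epsilon>0$ and $\delta>0$.

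\emph{Step 1: localize $(\vparamkt)_t$.} Since $(\vparamkt)_t$ is bounded in probability, there is a compact $K \subset \pkspace$ with $\Pr(\vparamkt \in K) \geq 1-\delta/2$ for all $t$.

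\emph{Step 2: a uniform modulus of continuity near the diagonal.} By lemma~\ref{lemma:deltac0}, under conditions~\ref{cond:pcontinuity} and~\ref{cond:bdiscrete}, the function $V_{\Cx}$ is jointly continuous on $\pkspace\times\pkspace$, and $V_{\Cx}(\theta,\theta)=0$. I claim there exists $\eta>0$ such that
\begin{equation*}
\theta \in K,\ \dk(\theta,\ttheta)<\eta \ \Longrightarrow\ V_{\Cx}(\theta,\ttheta)<\epsilon .
\end{equation*}
I argue by contradiction. Otherwise there are $\theta_k\in K$ and $\ttheta_k$ with $\dk(\theta_k,\ttheta_k)<1/k$ and $V_{\Cx}(\theta_k,\ttheta_k)\geq\epsilon$. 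Since $K$ is a compact metric set, a subsequence satisfies $\theta_k\to\theta^*\in K$, and then $\ttheta_k\to\theta^*$ as well. Joint continuity then gives $V_{\Cx}(\theta_k,\ttheta_k)\to V_{\Cx}(\theta^*,\theta^*)=0$, which is a contradiction.

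This is the only delicate step. The point is that $\ttheta$ ranges outside $K$, so we cannot use uniform continuity on $K\times K$ directly. The sequential argument above sidesteps this by using only compactness of $K$ and the metric $\dk$. An alternative would be to cover $K$ by finitely many balls on which $V_{\Cx}$ is small near the diagonal.

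\emph{Step 3: conclude.} By condition~\ref{cond:da_param}, there is $T$ such that $\Pr(\dk(\vparamktt,\vparamkt)\geq\eta)<\delta/2$ for $t\geq T$. On the event $\{\vparamkt\in K\}\cap\{\dk(\vparamktt,\vparamkt)<\eta\}$, Step 2 gives $\Dct<\epsilon$. Hence, for $t\geq T$,
\begin{equation*}
\Pr(\Dct\geq\epsilon)\leq \Pr(\vparamkt\notin K)+\Pr(\dk(\vparamktt,\vparamkt)\geq\eta)<\delta .
\end{equation*}
Since $\delta$ and $\epsilon$ are arbitrary, $\Dct\to 0$ in probability, which is condition~\ref{cond:diminishing_adaptation_Cx}.

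A minor point to check is measurability of $\Dct$. It follows from continuity of $V_{\Cx}$, since $\Dct$ is a continuous function of the random pair $(\vparamkt,\vparamktt)$.
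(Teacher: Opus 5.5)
Your proof is correct and follows essentially the same route as the paper. Both localize via boundedness in probability, use the continuity of $V_{\Cx}$ from lemma~\ref{lemma:deltac0} to obtain a uniform $\eta$ near the diagonal, and conclude with a union bound against condition~\ref{cond:da_param}. The only difference is minor: the paper places both $\vparamkt$ and $\vparamktt$ in the compact $\Ctheta$ (at cost $\delta/4$ each) and invokes uniform continuity of $V_{\Cx}$ on $\Ctheta^2$, whereas you localize only $\vparamkt$ and get the modulus from your sequential-compactness argument, which is equally valid.
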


\begin{proof}
	Take $\epsilon>0$ and $\delta>0$, and let $V_{\Cx}$ denote the function $(\theta, \ttheta) \in \Theta^2 \mapsto \sup_{x \in \Cx} \|P_{\paramk}(x, \cdot) - P_{\ttheta}(x, \cdot)\|$. We need to show that for any initial condition $\initchain$ there exists $T$ such that, for all $t\geq T$, $\Pr(\Dct > \epsilon | \vxt[0]=\xt[0], \vparamkt[0]=\paramkt[0], \vparamut[0]=\paramut[0]) \leq \delta$, where $\Dct := V_{\Cx}(\vparamkt, \vparamktt)$. In the following, I will abuse notations and omit conditioning over the initial condition. As $(\vparamkt)_t$ is bounded in probability, there exists $\Ctheta$ a compact subset of $\Theta$ such that $\Pr(\vparamkt \in \Ctheta) > 1 - \delta/4$ for all $t$. Then for $H_t$ the event $\lbrace (\vparamkt, \vparamktt) \notin \Ctheta^2 \rbrace$, $\Pr(H_t) < \delta/2$.
	
	By lemma~\ref{lemma:deltac0}, $V_{\Cx}$ is continuous, so uniformly continuous on $\Ctheta^2$.  Hence, as $V_{\Cx}(\theta, \theta)$ equals $0$, there exists $\eta > 0$ such that if $(\theta, \ttheta) \in \Ctheta^2$ and if $\dk(\theta, \ttheta) < \eta$, then $V_{\Cx}(\theta, \ttheta) < \epsilon$. Let $E_{\eta,t}$ be the event $\lbrace \dk(\vparamkt, \vparamktt) < \eta \rbrace$. By condition~\ref{cond:da_param}, there exists $T_\delta$ such that for all $t\geq T_\delta$, $\Pr(E_{\eta,t}^c) < \delta/2$. For any $t \geq T_\delta$, the event $H_t^c \cap E_{\eta,t}$ is disjoint with the event $\lbrace \Dct > \epsilon \rbrace$, so
	\begin{align*}
		\Pr(\Dct > \epsilon) &= \Pr(\Dct > \epsilon \cap H_t) + \Pr(\Dct > \epsilon \cap H_t^c) \\
		&\leq \Pr(H_t) + \Pr(\Dct > \epsilon \cap H_t^c \cap E_{\eta,t}) + \Pr(\Dct > \epsilon \cap H_t^c \cap E_{\eta,t}^c) \\
		&\leq \delta/2 + 0 + \Pr(E_{\eta,t}^c) \\
		&\leq \delta/2 + \delta/2 \enspace ,
	\end{align*}
	which proves the proposition.
\end{proof}

\subsection{Ergodicity of an adaptive MCMC}	\label{subsec:ergodicity}

We can now prove the main result, which is an adaptation of  \cite[Theorem~1 and Theorem~2]{roberts2007ergodicitymcmc} to our context.

\begin{theorem}	\label{th:erg}
	Consider an adaptive MCMC algorithm defining a Markov chain $\chaint$, targeting a probability distribution $\pi$, for which conditions \ref{cond:containment_param},  \ref{cond:da_param} and \ref{cond:pergodic} hold. If furthermore, either conditions~\ref{cond:ptcontinuity} to \ref{cond:bdiscrete} or conditions \ref{cond:pcontinuity}, \ref{cond:plocalbound} and \ref{cond:bsupcond} are verified, 
		then the adaptive MCMC is ergodic relatively to $\pi$, i.e. for any initial condition $\initchain$, $\|\algt(\xt[0], \paramkt[0], \paramut[0], \cdot) - \pi\|$ converges to $0$ as $t$ goes to infinity.
	\end{theorem}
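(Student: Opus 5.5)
The plan is to reduce the theorem to the coupling argument of \cite[Theorem~1 and Theorem~2]{roberts2007ergodicitymcmc}, with containment \ref{cond:probability_bound} coming from Proposition~\ref{proposition:containment}. Diminishing adaptation will only be available in the weaker compact form \ref{cond:diminishing_adaptation_Cx}, via Proposition~\ref{prop:dacompact}, so the coupling step has to be localised.

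\textbf{Step 1: reduce both hypothesis branches to the same conditions.}
\begin{itemize}
\item Under \ref{cond:pcontinuity}, \ref{cond:plocalbound} and \ref{cond:bsupcond}, Lemma~\ref{lemma:ptc0} gives \ref{cond:ptcontinuity}, and \ref{cond:bsupcond} contains \ref{cond:bdiscrete}.
\item Under \ref{cond:ptcontinuity} and \ref{cond:bdiscrete}, the case $t=1$ of \ref{cond:ptcontinuity} is exactly \ref{cond:pcontinuity}.
\end{itemize}
So in both branches \ref{cond:ptcontinuity}, \ref{cond:bdiscrete} and \ref{cond:pcontinuity} hold. Then Proposition~\ref{proposition:containment}, together with \ref{cond:pergodic} and \ref{cond:containment_param}, gives uniform ergodicity on compacts. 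Proposition~\ref{prop:dacompact} applies because \ref{cond:containment_param} makes $(\vparamkt)_t$ bounded in probability, and together with \ref{cond:da_param} it gives \ref{cond:diminishing_adaptation_Cx}.

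\textbf{Step 2: fix the constants in a non-circular order.}
1. Fix $\epsilon>0$ and the initial condition. Use \ref{cond:containment_param} to get a compact $C_\epsilon\subset\xdom\times\Theta$ with $\Pr((\vxt,\vparamkt)\in C_\epsilon)\ge 1-\epsilon$ for all $t$.
2. Use the first part of Proposition~\ref{proposition:containment} to get $N$ with $\|P^N_\theta(x,\cdot)-\pi\|<\epsilon$ for all $(x,\theta)\in C_\epsilon$.
3. Only now, with $N$ fixed, use \ref{cond:containment_param} again to get a compact $\Cx\subset\xdom$ with $\Pr(\vxt\notin\Cx)<\epsilon/N$ for every $t$. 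This ordering is what avoids circularity.
4. By \ref{cond:diminishing_adaptation_Cx} applied to this $\Cx$, choose $T_0$ such that $\Pr(\Dct[k]>\epsilon/N^2)<\epsilon/N$ for all $k\ge T_0$.

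\textbf{Step 3: the coupling, for $t\ge T_0+N$.} On the same probability space, run from time $t-N$ a second chain $(X'_k)_{t-N\le k\le t}$ with $X'_{t-N}=\vxt[t-N]$ and the frozen kernel $P_{\vparamkt[t-N]}$.

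At each step $k$, while $X'_k=\vxt[k]$, couple the two transitions maximally. The adaptive chain moves with $P_{\vparamkt[k]}(\vxt[k],\cdot)$ and the frozen one with $P_{\vparamkt[t-N]}(\vxt[k],\cdot)$. On the good event where $\vxt[k]\in\Cx$ and $\Dct[j]\le\epsilon/N^2$ for all $j$ in the window, the triangle inequality gives a total variation between these at most $\sum_{j=t-N}^{k-1}\Dct[j]\le\epsilon/N$. Hence the probability that the chains separate at step $k$ is at most $\epsilon/N$ plus the probability of leaving the good event at that step.

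Summing over the $N$ steps and taking a union bound gives $\Pr(\vxt\ne X'_t)\le \epsilon+N\cdot\epsilon/N+N\cdot\epsilon/N=3\epsilon$.

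\textbf{Step 4: conclude.} Conditionally on $(\vxt[t-N],\vparamkt[t-N])\in C_\epsilon$, the law of $X'_t$ is within $\epsilon$ of $\pi$; this event fails with probability at most $\epsilon$. Therefore $\|\algt(\xt[0],\paramkt[0],\paramut[0],\cdot)-\pi\|\le 5\epsilon$ for all $t\ge T_0+N$, and since $\epsilon$ is arbitrary this proves ergodicity.

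\textbf{Main obstacle.} The delicate step is the localisation in Step 3: diminishing adaptation is only uniform over the compact $\Cx$, not over all of $\xdom$. That is why $\Cx$ must be chosen after $N$, so that containment of $\vxt$ in $\Cx$ holds simultaneously over the whole window of length $N$ with total failure probability at most $\epsilon$.

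A second point to check is measurability. The maximal coupling must be built through a measurable selection given $(\vxt[k],\vparamkt[k],\vparamkt[t-N])$, exactly as in \cite{roberts2007ergodicitymcmc}. This works because $\paramu$ influences the $\vxt$-transition only through $\vparamkt$.
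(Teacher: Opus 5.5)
Your proposal is correct and follows essentially the same route as the paper: you obtain \ref{cond:probability_bound} and \ref{cond:diminishing_adaptation_Cx} from Propositions~\ref{proposition:containment} and \ref{prop:dacompact}, then run the Roberts--Rosenthal coupling over a window of fixed length $N$ (the paper's $T_M$). As in the paper, $\Cx$ is chosen after $N$ so that $\Pr(\vxt\notin\Cx)<\epsilon/N$, and the adaptation threshold is $\epsilon/N^2$, matching the paper's $\epsilon/T_M$ and $\epsilon/T_M^2$. The differences are cosmetic: you use the uniform ergodicity on compacts from Proposition~\ref{proposition:containment} directly instead of $M_\epsilon$, you slide the window to end at $t$ rather than start at $T_D$, and you make explicit that \ref{cond:ptcontinuity} at $t=1$ gives \ref{cond:pcontinuity}, which the paper leaves implicit.
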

	
	\newcommand{\tXt}[1][t]{\tilde{X}_{#1}}
	\newcommand{\law}{\mathcal{L}}
	\newcommand{\Tce}{T_{C,\epsilon}}
	\begin{proof}
		By proposition~\ref{proposition:containment} and proposition~\ref{prop:dacompact} (and eventually lemma~\ref{lemma:ptc0}), conditions \ref{cond:probability_bound} and \ref{cond:diminishing_adaptation_Cx} hold.
		The rest of the proof is essentially the same as for \cite[Theorem~1]{roberts2007ergodicitymcmc}, which can be easily adapted to account for the weaker version of Diminishing Adaptation from condition~\ref{cond:diminishing_adaptation_Cx} instead of \ref{cond:diminishing_adaptation}. 
		
		The idea is, given the sequence $(\vxt)_t$ following $\algt(x_0, \theta_0, \paramut[0], \cdot)$ and $\epsilon>0$, to show the existence of $T$ such that for all $t\geq T$, another sequence $(\tXt)_t$ can be constructed, with $\|\law(X_t) - \law(\tXt)\| < \epsilon$, and $\|\law(\tXt) - \pi\| < \epsilon$ (where $\law(X)$ denotes the law of $X$). Ergodicity follows by triangular inequality. 
		
		By condition~\ref{cond:probability_bound}, there exists $T_M$ such that $\Pr(M_\epsilon(\vxt, \vparamkt) > T_M) < \epsilon$ for all $t\in\N$. Take $C := \Cx \times \Ctheta$ a compact such that $\Pr((\vxt, \vparamkt) \in C) > 1 - \epsilon/T_M$ for all $t \in \N$, and let $H_t$ denote the event $\lbrace \Dct \geq \epsilon / T_M^2 \textrm{ or } \vxt \notin \Cx \rbrace$. As $\Dct$ converges to $0$ in probability, there exists $T_D$ such that $\Pr(H_t) \leq 2\epsilon /T_M$ for all $t \geq T_D$. Let $E$ denote the event $\lbrace M_\epsilon(\vxt[T_D], \vparamkt[T_D]) < T_M \rbrace \cap \bigcap_{t = T_D}^{T_D + T_M -1} H_t^c$. This event occurs with high probability, larger than $1-3\epsilon$, and by induction and the triangular inequality, conditionally to the event $E$,  $\sup_{x \in \Cx}\|P_{\vparamkt[T_D+k+1]}(x, \cdot) -  P_{\vparamkt[T_D]}(x, \cdot)\| < \epsilon/T_M$ for all $k < T_M$. 
	
		We take $\tXt$ equal to $\vxt$ for the first $T_D$ iterations. Let $A_k$ denote the event $\lbrace\tXt[T_D+k] = \vxt[T_D+k]\rbrace$. If $E$ holds, as then $\|P_{\vparamkt[T_D+1]} (\vxt[T_D], \cdot) - P_{\vparamkt[T_D]}(\vxt[T_D],\cdot)\| < \epsilon / T_M$, we can construct $\tXt[T_D+1]$ such that $\tXt[T_D+1] \sim P_{\vparamkt[T_D]}(\vxt[T_D],\cdot)$ and that $\Pr(A_1 | E) > 1 - \epsilon/T_M$ (see e.g. \cite[Proposition~3(g)]{roberts2004general}). Inductively, we can repeat this to construct $\tXt[T_D+k]$ such that $\tXt[T_D+k] \sim P_{\vparamkt[T_D]}(\tXt[T_D+k-1],\cdot)$ and that $\Pr(A_k | E) > 1 - k\epsilon/T_M$.
		Indeed, suppose that this holds for some $k$: as $\|P_{\vparamkt[T_D+k]} (\vxt[T_D+k], \cdot) - P_{\vparamkt[T_D]}(\vxt[T_D+k],\cdot)\| < \epsilon / T_M$ on $E$, then conditionally to $A_k$ and by the same process, $\tXt[T_D+k+1]$ can be constructed such that $\tXt[T_D+k+1] \sim P_{\vparamkt[T_D]}(\tXt[T_D+k], \cdot)$ and $\Pr(A_{k+1}|E, A_k) > 1 - \epsilon / T_M$. Thus
		\begin{align*}
			\Pr(A_{k+1} | E) &= \Pr(A_{k+1} \cap A_k | E)) + \Pr(A_{k+1} \cap A_k^c | E)) \\ 
			& \geq \Pr(A_{k+1} | E, A_k))\Pr(A_k|E) + 0 \\
			& > (1 - \epsilon / T_M)(1 - k\epsilon/T_M) \\
			&> 1 - (k+1)\epsilon/T_M \enspace .
		\end{align*}
		In particular, $\Pr(A_{T_M}^c) < \epsilon + \Pr(E^c) \leq 4\epsilon$, which shows that $\|\law(\vxt[T_D+T_M]) - \law(\tXt[T_D+T_M])\| \leq 4\epsilon$.
		
		Furthermore, if $E$ holds, then $\|P_{\vparamkt[T_D]}^{T_M}(\vxt[T_D], \cdot) - \pi \| < \epsilon$. As $\tXt[T_D + T_M] \sim P_{\vparamkt[T_D]}^{T_M}(\vxt[T_D], \cdot)$ and that $\Pr(E^c) < 3 \epsilon$, by integration $\|\law(\tXt[T_D+T_M]) - \pi\| < 4\epsilon$. This concludes the proof, as this argument can be done for any $t \geq T_D$, and thus $\|\law(\vxt[t+T_M]) - \pi\| < 8\epsilon$ for all $t\geq T_D$.

	\end{proof}
	
	In the case of an adaptive Metropolis-Hastings, the following corollary can be used to prove ergodicity.
	\begin{corollary}	\label{coro:mherg}
		Consider an adaptive Metropolis-Hastings defining a Markov chain $\chaint$, with proposal kernels $(\Qp)_{\paramk \in \pkspace}$ and target distribution $\pi$, such that condition~\ref{cond:qcontinuity} is verified. If condition~\ref{cond:containment_param}, \ref{cond:da_param} and \ref{cond:pergodic} hold, then the adaptive Metropolis-Hastings is ergodic relatively to $\pi$.
	\end{corollary}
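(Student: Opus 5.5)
The corollary follows by reducing it to Theorem~\ref{th:erg}, using the continuity results of Section~\ref{subsec:continuity}. Conditions \ref{cond:containment_param}, \ref{cond:da_param} and \ref{cond:pergodic} are assumed, so the only task is to produce one of the two continuity packages required by the theorem. I would aim for the first one, conditions~\ref{cond:ptcontinuity} and \ref{cond:bdiscrete}.

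First, lemma~\ref{lemma:mhc0} states that, under \ref{cond:qcontinuity}, an adaptive Metropolis-Hastings satisfies \ref{cond:ptcontinuity} and \ref{cond:bdiscrete}. It is worth recalling why this holds, so that the hypotheses visibly line up. The kernel \eqref{eq:ptheta} splits into an absolutely continuous part with density $\alpha_\theta(x,y)q_\theta(x,y)$ and a singular part $(1-a(x,\theta))\dirac{x}$.
\begin{itemize}
\item The density is jointly continuous by continuity of $\gpi$ and $q$.
\item It is locally bounded in the sense of \ref{cond:plocalbound}, since $\alpha_\theta \leq 1$ and $(q_\theta(x,y))_x$ is locally bounded.
\item The singular part has a single atom $s_0(x)=x$, so \ref{cond:bsupcond} holds.
\item The weight $1-a(x,\theta)$ is continuous by corollary~\ref{coro:tightc0}, using the local tightness from proposition~\ref{prop:tightness}.
\end{itemize}
Lemma~\ref{lemma:ptc0} then upgrades \ref{cond:pcontinuity}, \ref{cond:plocalbound} and \ref{cond:bsupcond} to \ref{cond:ptcontinuity} for every $t$. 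Condition~\ref{cond:bdiscrete} is immediate from the single atom $s_0(x)=x$ with continuous weight $b_0 = 1-a$.

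Second, I would check that proposition~\ref{prop:dacompact}, which is invoked inside Theorem~\ref{th:erg}, gets what it needs. It requires \ref{cond:pcontinuity} together with \ref{cond:bdiscrete}. We have both from the first step, and boundedness in probability of $(\vparamkt)_t$ is the projection of \ref{cond:containment_param} onto $\Theta$. Applying Theorem~\ref{th:erg} with the first alternative of continuity conditions then gives ergodicity of the adaptive chain.

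The only mildly delicate point is topological. Proposition~\ref{prop:tightness} and corollary~\ref{coro:tightc0} assume $\xdom$ is $\sigma$-locally compact Hausdorff with $\xmeasure$ locally finite, and that the parameter space is locally compact. These standing assumptions are implicit in the applications to Metropolis-Hastings, e.g.\ $\R^n$ with Lebesgue measure, and I would simply state that they are in force, as they are for lemma~\ref{lemma:mhc0}.

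Beyond that, the proof is a direct chaining of lemma~\ref{lemma:mhc0} and Theorem~\ref{th:erg}, and I expect no real obstacle.
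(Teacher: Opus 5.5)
Your proposal is correct and matches the paper's proof, which is exactly the one-line chaining of Lemma~\ref{lemma:mhc0} (yielding \ref{cond:ptcontinuity} and \ref{cond:bdiscrete}) with Theorem~\ref{th:erg}. Your extra check that Proposition~\ref{prop:dacompact} receives \ref{cond:pcontinuity} is harmless but already covered by the theorem, since \ref{cond:ptcontinuity} at $t=1$ is precisely \ref{cond:pcontinuity}.
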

	
	\begin{proof}
		The corollary is an immediate consequence of theorem~\ref{th:erg} with lemma~\ref{lemma:mhc0}.
	\end{proof}

	\section{Applications} \label{sec:appli}

	In this section the previously derived results are applied to show the ergodicity of two different Metropolis-Hastings algorithm: the classical adaptive Metropolis-Hastings (AM) algorithm of \cite{haario2001}, and an adaptive Metropolis-Hastings inspired from the black-box optimisation algorithm CMA-ES~\cite{cmaes}. Both algorithms illustrate the use of $\vparamut$ to contain variables which influence the adaptation process and evolution of the kernel $P_{\vparamkt}(\vxt, \cdot)$, and are yet separate from both $\vxt$ and $\vparamkt$.  This allow to easily modelize the full adaptive algorithm as a time-homogeneous Markov chain, without any additional effort in proofs as no direct condition is required on $\vparamut$. In the following, the state space $\xdom$ is assumed to be $\R^\dim$, equipped with its Borel $\sigma$-algebra $\xsalgebra$ and Lebesgue measure $\xmeasure$. The density of $\pi$, $\gpi$, is also assumed continuous. The ergodicity of a non-adaptive Metropolis-Hastings is not the focus of this paper, so in the following condition~\ref{cond:pergodic} is assumed~\footnote{For example, \cite{mengersen1996geoconvmh} shows that if $\gpi$ is continuous, then the non-adaptive Metropolis-Hastings with candidate distribution $\Nl(\vxt, C)$ is $\pi$-almost everywhere ergodic for any non-degenerate covariance matrix $C$.}. Assuming that some variables (as specified further) of the two algorithms verify some bound in probability, diminishing adaptation  and thus ergodicity hold.

	\subsection{Adaptive Metropolis-Hastings}
	
	\newcommand{\mvxt}[1][t]{\bar{X}_{#1}}
	
	At time $t \in \N$, given $t_0\in \N$, a symmetric definite positive matrix $C_0 \in \cspace$ and a sequence of samples $(\vxt[k])_{k \leq t}$, AM uses a multivariate normal proposal $\Nl(\vxt, C_t)$ whose covariance matrix $C_t$ follows 
	\begin{equation} \label{eq:haario}
		\Ctt = \left\lbrace \begin{array}{lr}
			\Ct[0] & \textrm{ if }  t < t_0, \\
			s_\dim \cov\left(\vxt[0], \ldots, \vxt\right) + s_\dim \epsilon \Id  & ~~~~\textrm{ else.}
		\end{array} \right.
	\end{equation}
	The constant $s_\dim>0$ controls the scaling, and  $\cov(\vxt[0], \ldots, \vxt )$ denotes the sample covariance matrix $1/t \sum_{i=0}^{t} (\vxt[i] - \mvxt)(\vxt[i] - \mvxt)^T$, with $\bar{X}_t$ the sample mean $1/(t+1) \sum_{i=0}^t \vxt[i]$. Adding $s_n \epsilon$ times the identity matrix $\Id$ with $\epsilon > 0$ prevents the proposal from being degenerate. 
	The proposal is thus entirely parametrized by $\paramkt := \Ct$, and verifies condition~\ref{cond:qcontinuity}.
	For $t>t_0$, $\Ct$ follows the recursive formula
	\begin{equation} \label{eq:amctrec}
		\Ctt = \frac{t-1}{t}\Ct + \frac{s_\dim}{t}\left( t \mvxt[t-1]\mvxt[t-1]^T - (t+1)\mvxt\mvxt^T + \vxt\vxt^T + \epsilon \Id \right) \enspace ,
	\end{equation}
	so adding $(t, \mvxt[t-1], \mvxt)$ to $\vparamut$ suffice to have a time-homogeneous Markov chain.
	As stated in the following proposition, given that $(\vxt)_t$ is bounded enough, the update of AM is stable and the resulting algorithm is ergodic.
	\begin{proposition}	\label{pr:amerg}
		Consider the AM algorithm targeting the distribution $\pi$, generating a sequence $(\vxt, \Ct)_t$ from an initial condition $(\xt[0], C_0)$. Suppose that the density of $\pi$ is continuous, that condition~\ref{cond:pergodic} holds and that the sequence $(\vxt)_t$ is uniformly bounded in $L^2$, in the sense of the existence of $M_X > 0$ for which $\E(\|\vxt\|^2) < M_X$ for all $t \in \N$. Then AM is ergodic relatively to $\pi$.
	\end{proposition}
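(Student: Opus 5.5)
The plan is to apply corollary~\ref{coro:mherg} with $\paramk = C$ and $\pkspace = \cspace$, equipped with the distance induced by the Frobenius norm. Condition~\ref{cond:pergodic} is assumed, and condition~\ref{cond:qcontinuity} holds for the Gaussian proposal $\Nl(x,C)$. Indeed, $q_C(x,y)$ is jointly continuous in $(x,y,C)$ on $\R^\dim\times\R^\dim\times\cspace$. It is also locally bounded uniformly in $x$: near $C_0$ the density is bounded by $(2\pi)^{-\dim/2}\Det(C)^{-1/2}$, and $\Det(C)$ stays away from $0$ on a neighbourhood of $C_0$. It therefore remains to check \ref{cond:containment_param} and \ref{cond:da_param} for the chain $(\vxt,\Ct)_t$.

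For \ref{cond:containment_param}, I will use Markov's inequality. Since $\E\|\vxt\|^2<M_X$, we get $\Pr(\|\vxt\|>R)\le M_X/R^2$. For $\Ct$ with $t>t_0$, the matrix $\Ct - s_\dim\epsilon\Id$ is $s_\dim$ times a sample covariance, hence positive semidefinite. So $\Ct \succeq s_\dim\epsilon\Id$ and all eigenvalues are at least $s_\dim\epsilon$, which is a uniform lower bound. For the upper bound, $\|\cov(\vxt[0],\ldots,\vxt)\|\le \frac1t\sum_{i\le t}\|\vxt[i]\|^2$ (the centered second moment is bounded by the uncentered one, up to the factor $(t+1)/t\le 2$). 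Hence $\E\|\Ct\|\le 2s_\dim M_X + s_\dim\epsilon\sqrt{\dim}$ uniformly in $t$, and Markov's inequality bounds $\Pr(\|\Ct\|>R)$. The set $\{C\in\cspace: s_\dim\epsilon \le \lambda_{\min}(C),\ \|C\|\le R\}$, together with $C_0$ for the finitely many $t\le t_0$, is compact in $\cspace$. Its product with a closed ball of $\R^\dim$ gives the compact $C_\delta$ required by \ref{cond:containment_param}.

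For \ref{cond:da_param}, I will use the recursion \eqref{eq:amctrec}. Rewriting it gives
\begin{equation*}
\Ctt-\Ct = \frac{1}{t}\left(-\Ct + s_\dim\left(t\mvxt[t-1]\mvxt[t-1]^T-(t+1)\mvxt\mvxt^T+\vxt\vxt^T+\epsilon\Id\right)\right).
\end{equation*}
The terms $-\Ct$, $\vxt\vxt^T$ and $\epsilon\Id$ have expectations bounded uniformly in $t$, so after division by $t$ they go to $0$ in $L^1$. The main obstacle is the combination $t\mvxt[t-1]\mvxt[t-1]^T-(t+1)\mvxt\mvxt^T$, which contains terms of order $t$. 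To handle it, I will write $\mvxt = \mvxt[t-1] + (\vxt-\mvxt[t-1])/(t+1)$ and expand. The leading terms cancel, and what remains is $-\mvxt[t-1]\mvxt[t-1]^T$ minus cross terms $\mvxt[t-1](\vxt-\mvxt[t-1])^T$ and its transpose, plus $O(1/t)$ multiples of $(\vxt-\mvxt[t-1])(\vxt-\mvxt[t-1])^T$. By Cauchy--Schwarz and Jensen, $\E\|\mvxt[t-1]\|^2\le M_X$, so every remaining term has bounded expectation. Hence $\E\|\Ctt-\Ct\| = O(1/t)$, which gives convergence in probability to $0$.

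Corollary~\ref{coro:mherg} then yields ergodicity. The remaining care points are the bookkeeping for $t\le t_0$ and at $t=t_0$, where the recursion starts and the update is a single bounded-in-probability jump that does not affect limits. One should also check that the parameter space is taken to be the open cone $\cspace$: compactness must be verified inside it, and this is exactly why the uniform eigenvalue lower bound from the $\epsilon\Id$ term is needed.
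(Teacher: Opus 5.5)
Your proposal is correct and follows essentially the same route as the paper. It applies Corollary~\ref{coro:mherg} with condition~\ref{cond:qcontinuity}, gets containment from Markov's inequality together with the eigenvalue floor $s_\dim\epsilon\Id$, and gets diminishing adaptation from the recursion \eqref{eq:amctrec} with a Markov bound of order $1/t$. Your expansion of $t\bar{X}_{t-1}\bar{X}_{t-1}^T-(t+1)\bar{X}_t\bar{X}_t^T$ around $\bar{X}_{t-1}$ is just a rewriting of the paper's partial-sum expansion of $A_t$; it has the merit of making explicit that the quantity needing a uniform bound for the Markov step is $\E\|A_t\|$, of order $M_X$.
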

	
	\begin{proof}
		As previously mentioned, it is straightforward to check that condition~\ref{cond:qcontinuity} is fulfilled. It remains to verify conditions \ref{cond:containment_param} and \ref{cond:da_param} in order to deduce the proposition (using corollary~\ref{coro:mherg}). 
		
		From the bound $\E(\|\vxt\|^2) < M_X$ it easily follows that the sample mean $\bar{X}_t$ is also bounded in $L^2$ and the sample covariance $\cov(\vxt[0], \ldots, \vxt )$ in $L^\infty$, and so is $\Ct$.  
		By Markov's inequality, with high probability $\|\vxt\|_2^2$ and $\|\Ct\|_\infty$ are below some $r>0$. So with high probability $\vxt$ is contained in the compact ball $B(0,r)$, and $\Ct$ in the compact $\lbrace A \in \cspace | s_n \epsilon \Id \leq  A \leq r \Id \rbrace$ (as $s_n \epsilon \Id$ is added to $\Ct$), so  condition~\ref{cond:containment_param} is satisfied. 
		
		That diminishing adaptation holds can be inferred from Eq.~\eqref{eq:amctrec}: for any constant $\eta > 0$, by Markov's inequality
		\begin{align}	\label{eq:diffctAM}
			\Pr(\|\Ctt-\Ct\|_\infty > \eta) \leq \frac{1}{t \eta} \left( \E\left(\|\Ct\|_\infty\right) + s_\dim\left(\E(\|A_t\|_\infty) + \E(\|\vxt\vxt^T\|_\infty) + \epsilon\right) \right) \enspace,
		\end{align}
		where $A_t$ denotes the matrix $t\bar{X}_{t-1}\bar{X}_{t-1}^T - (t+1)\bar{X}_t\bar{X}_t^T$.
		The matrix $A_t$ can be developed into
		\begin{align*}
			A_t &= \frac{1}{t(t+1)}\left(\sum_{k=0}^t X_k \right) \left(\sum_{k=0}^t X_k\right)^T - \frac{1}{t}\left(\vxt \left(\sum_{k=0}^{t-1} \vxt[k]\right)^T +  \left(\sum_{k=0}^{t} \vxt[k] \right)\vxt^T \right) \enspace .
		\end{align*}
		For any vector $x$ and $y$ of $\R^n$, $\|x y^T\|_\infty \leq \|x\|_\infty \|y\|_\infty$ and thus $\|A_t\|_\infty \leq M_A := \sup_{t\geq1} 3(t+1)/t M_X^2$. Finally, denoting $M_C$ the sup of $\|\Ct\|_\infty$, from inequation~\eqref{eq:diffctAM}
		$$
		\Pr(\|\Ctt-\Ct\|_\infty > \eta) \leq \frac{1}{t\eta} \left(M_C + M_A + M_X^2 + \epsilon\right) \enspace ,
		$$
		which tends to $0$ as $t$ increases, showing that condition~\ref{cond:da_param} holds.
	\end{proof}

	\subsection{Rank-one Metropolis-Hastings Covariance Matrix Adaptation}

	The following algorithm is a truncated and simplified version of an adaptive Metropolis-Hastings algorithm which will be the subject of subsequent publication. In particular, diminishing adaptation is enforced here by directly decreasing the algorithm's learning rates, in an inefficient but simple way, as the goal here is just to illustrate the use of this paper's results. 
	
	\newcommand{\cone}{c_1}
	\newcommand{\conet}[1][t]{c_{1,#1}}
	This adaptive Metropolis-Hastings, named rank-one MH-CMA, uses a multivariate normal candidate distribution $\Qp := \Nl(\cdot, \st[]^2 \Ct[])$. The covariance matrix $\vparamkt = \st^2\Ct$ is split into a global scaling $\st$, and a symmetric positive-definite matrix $\Ct$ which governs the shape of the sampling distribution. The determinant of $\Ct$ is kept constant, so the evolution of the global scaling is entirely done by the update rule of $\st$.
	The adaptation mechanism is inspired from the Covariance Matrix Adaptation Evolution Strategy (CMA-ES), a state-of-the-art algorithm for derivative-free continuous optimization~\cite{cmaes}. 
	The adaptation of $\st$ is inspired from the so-called one-fifth success rule~\cite{schumer1968adaptive} for evolution strategies, which increases or decreases $\st$ such that about one-fifth of the generated samples are accepted. Note that a very similar adaptation mechanism has been proposed in the context of adaptive MCMC~\cite{spencer2021accelerating}. 
	Here, aiming at a acceptance rate of $\ptarget := 0.234$ as suggested in \cite{rosenthal2011optimal}, after generating a new sample $\vyt$ from $\Nl(\vxt, \st^2 \Ct)$,
	$\st$ is updated following
	\begin{equation} \label{eq:sigupdate}
		\stt = \st \exp \left( \fac_t \frac{\alpha(\vxt,\vyt)- \ptarget}{1 - \ptarget} \right) \enspace ,
	\end{equation}
	where $\fac_t > 0$ controls the amplitude of change in the scaling $\st$. At initialization, $\fac_0$ can be set to $1$.
	When $\alpha(x,y)$ is larger than $\ptarget$, the scaling is increased, which should decrease the probability of accepting the next candidate.
	The adaptation of $\Ct$ is inspired by the so-called rank-one update of CMA-ES: the successful steps are accumulated in a so-called evolution path $\pc$
	\begin{equation} \label{eq:pcupdate}
		\pcc = \left\lbrace \begin{array}{l} (1 - c_c) \pc + \sqrt{c_c ( 2 - c_c)} \frac{\vxt[t+1] - \vxt}{\st} ~~~~\textrm{if }\vyt \textrm{ is accepted,} \\ \pc ~~~~~~~~ \textrm{else.} \end{array} \right.
	\end{equation}
	A default value for $c_c$ is $\sqrt{(4 + 1/\dim) / (\dim + 4 + 2 / \dim)}$. The matrix $\Ct$ is then reinforced in the direction of the evolution path and decreased in the others, following 
	\begin{equation} \label{eq:Cupdate}
		\Ctilde = \left\lbrace \begin{array}{ll}  \left(1 - \conet \right) \Ct + \conet \pcc {\pcc}^T ~~ &\textrm{if }\vyt\textrm{ is accepted,} \\ 
			\Ct ~~~~~~~~ & \textrm{else.} \\ \end{array} \right. 
	\end{equation}
	The covariance matrix is then normalized to have constant determinant:
	\begin{equation} \label{eq:detnorm}
		\Ctt = \left(\frac{\Det\Ct[0]}{\Det(\Ctilde)} \right)^{\frac{1}{\dim}} \Ctilde \enspace . 
	\end{equation}
	The coefficient $\conet$ is in $(0,1)$ and controls the amplitude of change in $\Ctt$. At initialization, $\cone := \conet[0]$ is set to $5/3 / ((\dim + 1.3)^2 + 1)$. 
	
	Whenever a candidate $\vyt$ is accepted, the learning rates $\fac_t$ and $\conet$ are decreased by a factor $\gamma > 1$ to enforce diminishing adaptation. A variable $\tau_t$ counts the number of accepted samples at time $t$, and thus $\fac_t = \fac_0 / \gamma^{\tau_t}$ and $\conet = \cone / \gamma^{\tau_t}$. As previously mentioned, such an adaptation of the learning rates is only intended as an example, not for practice.
	
	In order to quantify the distance between two successive covariance matrices, consider $d$ the function which to two $\dim\times\dim$-symmetric positive definite matrices $A$ and $B$ associates
	\begin{equation}	\label{eq:dc}
		d(A,B) := \sqrt{\sum_{i=1}^\dim \log^2(a_i)} \enspace ,
	\end{equation}
	where $a_i$ is the $i^{\textrm{th}}$ eigenvalue of $A^{-1}B$. The function $d$ defines a distance on $\cspace$ which, among other things, is invariant under congruence transformations ($d(A,B) = d(CAC^T, CBC^T)$ for any invertible $\dim\times\dim$-matrix $C$), and makes $\cspace$ complete~\cite{forstner2003metric}. Given the update rule Eq.~\eqref{eq:Cupdate}, the following lemma characterizes the distance $d(\Ctt, \Ct)$.
	
	\begin{lemma} \label{lemma:dctctt}
		Let $(\Ct)_t$ be a sequence of symmetric definitive positive matrices following equations~\eqref{eq:Cupdate} and \eqref{eq:detnorm}. If $\vyt$ is accepted then 
		\begin{equation} \label{eq:dctctt}
			d(\Ctt, \Ct) = \sqrt{\frac{\dim-1}{\dim}} \log\left(1+ \frac{\conet \|\pcc\|^2_{\Ct}}{1-\conet} \right) \enspace .
		\end{equation}
	\end{lemma}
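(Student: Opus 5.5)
By congruence invariance of $d$, we may replace $\Ct$ by the identity. Write $\Ct = L L^T$ with $L$ invertible (for instance the symmetric square root), and set $u := L^{-1}\pcc$. Then $\|u\|^2 = \pcc^T \Ct^{-1} \pcc = \|\pcc\|^2_{\Ct}$. When $\vyt$ is accepted, equation~\eqref{eq:Cupdate} gives
$$L^{-1}\Ctilde L^{-T} = (1-\conet)\Id + \conet u u^T.$$
Similarly, $L^{-1}\Ctt L^{-T} = \kappa\,\big((1-\conet)\Id + \conet uu^T\big)$, where $\kappa := (\Det\Ct[0]/\Det\Ctilde)^{1/\dim}$ is the normalization factor from \eqref{eq:detnorm}. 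Congruence invariance then yields $d(\Ctt,\Ct) = d\big(\Id, \kappa((1-\conet)\Id + \conet uu^T)\big)$. This distance depends only on the eigenvalues of the second matrix.

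These eigenvalues are explicit. The matrix $(1-\conet)\Id + \conet uu^T$ has eigenvalue $1-\conet+\conet\|u\|^2$ in the direction $u$, and eigenvalue $1-\conet$ with multiplicity $\dim-1$ on $u^\perp$. To obtain $\kappa$, we use that the determinant is preserved along the recursion, so $\Det\Ct = \Det\Ct[0]$ (for $t=0$ directly, afterwards by \eqref{eq:detnorm}). Hence
$$\Det\Ctilde = \Det\Ct\,(1-\conet)^{\dim-1}(1-\conet+\conet\|u\|^2),$$
and
$$\kappa = (1-\conet)^{-(\dim-1)/\dim}\,(1-\conet+\conet\|u\|^2)^{-1/\dim}.$$

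Next, put $r := 1 + \conet\|u\|^2/(1-\conet)$, so that $1-\conet+\conet\|u\|^2 = (1-\conet) r$. The eigenvalues of $\kappa((1-\conet)\Id+\conet uu^T)$ are then $r^{(\dim-1)/\dim}$, once, and $r^{-1/\dim}$, with multiplicity $\dim-1$. Their logarithms are $\frac{\dim-1}{\dim}\log r$ and $-\frac1\dim \log r$. Summing the squares gives
$$\log^2 r\left(\frac{(\dim-1)^2}{\dim^2} + \frac{\dim-1}{\dim^2}\right) = \frac{\dim-1}{\dim}\log^2 r.$$
Taking the square root, and noting $r\ge 1$ so that $\log r \ge 0$, we obtain \eqref{eq:dctctt}.

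The only delicate points are bookkeeping. First, we must check that $\Det\Ct=\Det\Ct[0]$ holds for every $t$, including the rejection case where $\Ctilde=\Ct$, so that $\kappa$ takes the form above. Second, the congruence step requires the eigenvalues of $A^{-1}B$ with $A=\Id$; this is legitimate because $B$ is symmetric positive definite. The edge case $u=0$ gives $r=1$ and distance $0$, which is consistent with the formula.
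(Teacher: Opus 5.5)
Your proof is correct and follows essentially the same route as the paper. Both whiten by $\Ct^{-1/2}$ via congruence invariance, read off the eigenvalues of the rank-one perturbation, use $\Det\Ct=\Det\Ct[0]$ to fix the normalization constant, and sum the squared logarithms of $r^{(\dim-1)/\dim}$ (once) and $r^{-1/\dim}$ (with multiplicity $\dim-1$); the only cosmetic difference is that the paper folds the factor $1-\conet$ into its constant $\kappa$ from the start, while you keep it separate until introducing $r$.
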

	
	\begin{proof}
		Let $a_i$ denote the $i^{\textrm{th}}$ eigenvalue of $\Id + \conet/(1-\conet)\Ct^{-1/2} \pcc (\Ct^{-1/2} \pcc)^T$, and $\kappa$ denote $(1-\conet)$ multiplied by the determinant normalization $(\det \Ct[0] / \det(\Ctilde))^{1/\dim}$. Using that the eigenvalues $\kappa a_i$ of $\Ct^{-1/2} \Ctt \Ct^{-1/2}$ are the same as for $\Ctt \Ct^{-1}$, $d(\Ct, \Ctt)$ is equal to the square root of $\sum_i \log^2(\kappa a_i)$.
		
		For any $v \in \R^n$ and $b\in\R$ with $\|v\|$ equal to one, the matrix $\Id + (b-1)vv^T$ has eigenvalues $(b, 1, \cdots,1)$, so  with $b$ defined as $1+ \conet/(1-\conet)\|\pcc\|_{\Ct}^2$, the eigenvalues $(a_i)_i$ are $(b, 1, \ldots, 1)$, and the determinant of $\Ctilde$ is $(1-\conet)^\dim \det(\Ct) b$. As $\det(\Ct)$ equals $\det(\Ct[0])$, $\kappa$ equals $b^{-1/\dim}$ and thus the eigenvalues $(\kappa a_i)_i$ are $(b^{1-1/\dim}, b^{-1/\dim}, \ldots, b^{-1/\dim})$. Therefore, $d(\Ctt, \Ct)$ is the square root of $(1-1/\dim)^2\log^2 b + \sum_{i=2}^\dim  (-1 / \dim)^2 \log^2 b$, which concludes the proof.
	\end{proof}
	
	The resulting algorithm forms a sequence $\chaint$, with $\vparamkt = (\st^2 \Ct)$ and $\vparamut = (\st, \Ct, \pc, \tau_t)$. 
	From Eq.~\eqref{eq:dctctt} it can be infered that if $\E(\|\pcc\|^2_{\Ct})$ is uniformly bounded and that $\conet$ converges to $0$, then diminishing adaptation holds. This is shown in the following proposition, which under further assumptions of a bound in probability on $(\vxt, \vparamkt)_t$ proves the ergodicity of the rank-one MH-CMA.
	
	\begin{proposition}
		\label{pr:mhcmaerg}
		Consider the rank-one MH-CMA algorithm targeting the distribution $\pi$. Suppose that $\pi$ admits a continuous density $\gpi$ with respect to the Lebesgue measure, and condition~\ref{cond:pergodic} that the non-adaptive kernels $P_{\sigma^2\C}$ are ergodic. If the sequence $(\vxt, \st^2\Ct, \|\pcc\|^2_{\Ct})_t$ is bounded in probability, then condition~\ref{cond:da_param}, parameter-wise diminishing adaptation, holds, and for any initial condition $\initchain$ the rank-one MH-CMA is ergodic relatively to $\pi$.
	\end{proposition}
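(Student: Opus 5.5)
The plan is to invoke corollary~\ref{coro:mherg}. Condition~\ref{cond:qcontinuity} holds because the proposal $\Nl(x,\theta)$ with $\theta=\st[]^2\C$ has a density jointly continuous in $(x,y,\theta)$ on $\R^\dim\times\R^\dim\times\cspace$. It is also locally bounded in $x$ uniformly near $(\theta_0,y_0)$: for $\theta$ near $\theta_0$ the normal density is bounded by $(2\pi)^{-\dim/2}\Det(\theta)^{-1/2}$. Condition~\ref{cond:pergodic} is assumed. Condition~\ref{cond:containment_param} is exactly the assumed bound in probability on $(\vxt,\st^2\Ct)$. A bounded-in-probability sequence of SPD matrices lies in a compact subset of $\cspace$, and I read the hypothesis this way for the metric $\dk$ on $\pkspace=\cspace$. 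So the only real work is condition~\ref{cond:da_param}: showing that $\dk(\st[t+1]^2\Ctt,\st^2\Ct)\to0$ in probability. For $\dk$ I would take the metric $d$ of Eq.~\eqref{eq:dc}.

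First I split the distance with the triangle inequality:
\begin{equation*}
d(\st[t+1]^2\Ctt,\st^2\Ct)\le d(\st[t+1]^2\Ctt,\st^2\Ctt)+d(\st^2\Ctt,\st^2\Ct).
\end{equation*}
By congruence invariance, the second term equals $d(\Ctt,\Ct)$. The first term is $\sqrt{\dim}\,|\log(\st[t+1]^2/\st^2)|=2\sqrt{\dim}\,|\log(\stt/\st)|$. By Eq.~\eqref{eq:sigupdate} this is at most $2\sqrt{\dim}\,\fac_t\max(\ptarget,1-\ptarget)/(1-\ptarget)\le 2\sqrt{\dim}\,\fac_t\cdot\ptarget/(1-\ptarget)$ up to a constant, so it is bounded by a constant times $\fac_t=\fac_0\gamma^{-\tau_t}$. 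When $\vyt$ is rejected, $\Ctt=\Ct$. When it is accepted, lemma~\ref{lemma:dctctt} gives $d(\Ctt,\Ct)\le\sqrt{(\dim-1)/\dim}\,\conet\|\pcc\|^2_{\Ct}/(1-\conet)$, using $\log(1+u)\le u$. Since $\conet\le\cone<1$, this is at most a constant times $\gamma^{-\tau_t}\|\pcc\|^2_{\Ct}$. Note also that $\sigma$ changes at every step, while $\C$ only changes on acceptance.

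The main obstacle is that $\fac_t$ and $\conet$ only shrink when $\tau_t\to\infty$, i.e.\ when the chain keeps accepting. On the event of finitely many acceptances, the scale $\sigma$ keeps multiplying by $\exp(-\fac_t\ptarget/(1-\ptarget))$ with $\fac_t$ frozen at a positive value, so $\st\to0$ geometrically. I would rule this out with containment. Since $\Det\Ct=\Det\Ct[0]$ is constant, $\Det(\st^2\Ct)=\st^{2\dim}\Det\Ct[0]$. Compactness of the set containing $\st^2\Ct$ with probability $1-\delta$ therefore bounds $\st$ below by some $s_\delta>0$ with probability $1-\delta$, uniformly in $t$. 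Take $T$ large, and consider the event that the last acceptance occurred before time $t-T$ and that $\tau_t\le k$. On this event, $\st\le\sigma_{t-T}\exp(-T\fac_0\gamma^{-k}\ptarget/(1-\ptarget))$. Combining this with an upper bound on $\sigma_{t-T}$ from containment, I would deduce that $\Pr(\tau_t\le k)\to0$ for each fixed $k$: frozen acceptances force $\st$ below $s_\delta$. Care is needed because $\tau_t$ could stay small while acceptances remain infrequent; I would handle this by noting that each non-accepting step shrinks $\sigma$ by a fixed factor depending only on $k$. Over a window of length $T$ with at most $k$ acceptances, the increases total at most $k\fac_0/(1-\ptarget)\cdot(1-\ptarget)$. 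So the net log-change is $\le \fac_0\gamma^{-k}(k-T\ptarget/(1-\ptarget)\cdot\text{const})$, which goes to $-\infty$ as $T$ grows, while containment bounds $\log\st$ in both directions with high probability.

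With $\tau_t\to\infty$ in probability, $\gamma^{-\tau_t}\to0$ in probability. Since $\|\pcc\|^2_{\Ct}$ is bounded in probability, the product $\gamma^{-\tau_t}\|\pcc\|^2_{\Ct}$ also tends to $0$ in probability. Hence both terms of the split go to $0$ in probability, which gives condition~\ref{cond:da_param}. Corollary~\ref{coro:mherg} then yields ergodicity.
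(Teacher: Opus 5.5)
Your checks of \ref{cond:qcontinuity}, \ref{cond:containment_param} and \ref{cond:pergodic} are fine. So is your split of $d(\sigma_{t+1}^2C_{t+1},\sigma_t^2C_t)$ into the scale term $2\sqrt{n}\,|\log(\sigma_{t+1}/\sigma_t)|\le 2\sqrt{n}\,\beta_t$ plus $d(C_{t+1},C_t)$; the paper's proof in fact only treats the shape term. The gap is in your proof that $\tau_t\to\infty$ in probability.

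\textbf{The gap.} You assert that every rejected step multiplies $\sigma$ by $\exp(-\beta_t p_{\mathrm{target}}/(1-p_{\mathrm{target}}))$ and that $\sigma$ increases only at acceptances. From this you conclude that $\{\tau_t\le k\}$ deterministically forces $\log\sigma$ to drop by about $T\beta_0\gamma^{-k}p_{\mathrm{target}}/(1-p_{\mathrm{target}})$ over a window of length $T$. That would be true if $\alpha(X_t,Y_t)$ in \eqref{eq:sigupdate} were the acceptance indicator. In the paper it is the acceptance probability $\min(1,g_\pi(Y_t)/g_\pi(X_t))$. A candidate with $\alpha(X_t,Y_t)=1/2>p_{\mathrm{target}}$ is rejected with probability $1/2$, and on that rejected step $\sigma$ grows. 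So few acceptances do not by themselves drive $\sigma$ down. The inclusion $\{\tau_t\le k\}\subset\{\sigma_{t-T}>S_\delta\}\cup\{\sigma_t<s_\delta\}$, which your contradiction with containment needs, is therefore not established. Note also that your own first bound, using $\max(p_{\mathrm{target}},1-p_{\mathrm{target}})$, treats $\alpha$ as $[0,1]$-valued, so the two halves of your proposal read the update rule differently.

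\textbf{What is missing} is a link between realized acceptances and acceptance probabilities. Let $I_s$ be the acceptance indicator and $\alpha_s=\alpha(X_s,Y_s)$. Then
$\E(e^{-I_s}\mid \textrm{past},Y_s)=1-(1-e^{-1})\alpha_s\le e^{-(1-e^{-1})\alpha_s}$.
Hence $\exp\bigl(-\sum_s I_s+(1-e^{-1})\sum_s\alpha_s\bigr)$, summed over the window $[t-T,t)$, is a supermartingale. The event ``at most $k$ acceptances in the window and $\sum_s\alpha_s\ge m$'' therefore has probability at most $e^{k-(1-e^{-1})m}$.

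Take $m=\gamma^{-k}p_{\mathrm{target}}T/2$ and use $\beta_s\ge\beta_0\gamma^{-k}$ on $\{\tau_t\le k\}$. Off this event, the log-change of $\sigma$ over the window is at most $-\beta_0\gamma^{-k}p_{\mathrm{target}}T/(2(1-p_{\mathrm{target}}))$. Your determinant-plus-containment argument then gives $\Pr(\tau_t\le k)\le 2\delta+e^{k-(1-e^{-1})m}$ for all $t\ge T$.

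\textbf{The paper's route} avoids this issue differently. Continuity of $g_\pi$ makes the acceptance rate $a(x,\sigma^2C)$ continuous. It is thus bounded below by some $\epsilon>0$ on the compact $K_\delta$, which $(X_t,\sigma_t^2C_t)$ occupies with probability at least $1-\delta$. Then $\tau_t$ is compared to a binomial variable with $m_t\to\infty$ trials. Once repaired, your route has the merit of using only the scale dynamics and $\Det C_t=\Det C_0$, with no continuity of $a$ required.
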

	
	\begin{proof}
		If $\tau_t$, the number of accepted samples, converges in probability to infinity, then $\conet$ converges in probability to $0$. Therefore from lemma~\ref{lemma:dctctt}, using that $\|\pcc\|^2_{\Ct}$ is bounded in probability, $d(\Ctt, \Ct)$ also converges in probability to $0$. So condition~\ref{cond:da_param} holds, which concludes the proof by corollary~\ref{coro:mherg}.
		
		It remains to show that $\tau_t \to +\infty$ in probability, which follows from the fact that the acceptance probability can be lower bounded with high probability. 
		Indeed, by continuity of $\gpi$, the acceptance probability $a$, as defined in Eq.~\eqref{eq:praccept}, is a continuous function of $x$ and $\sigma^2 C$. So for any $\delta> 0$, as there exists a compact $K_\delta$ such that $(\vxt, \st^2\Ct)$ is in $K_\delta$ with probability at least $1-\delta$, $a(\vxt, \st^2\Ct)$ is lower bounded by some $\epsilon > 0$ whenever $(\vxt, \st^2\Ct) \in K_\delta$. Let $m_t$ be the number of visits of $K_\delta$ before time $t$, i.e. $m_t :=\sum_{k=0}^t \ind_{K_\delta}(\vxt[k], \st[k]^2\Ct[k])$. Let $A_t$ be the event for which $K_\delta$ is visited as time $t$, and $E$ be the event for which $K_\delta$ is visited infinitely often, that is $E := \lbrace w | m_t(w) \to \infty\rbrace$.
		As $E^c$ is the $\liminf_t A_t$, $\Pr(E^c) \leq \delta$, so $\Pr(\tau_t \leq M) \leq \delta + \Pr(\tau_t \leq M | E)$. 
		And in $E$, $\tau_t$ is lower bounded by a sequence of binomial random variables $(b_t)$ where $b_t$ has $m_t$ trials and $\epsilon$ success probability. As $m_t \to \infty$ on $E$, $\Pr(\tau_t \leq M | E) \leq \Pr(b_t \leq M | E) \to 0$, so for $t$ large enough $\Pr(\tau_t \leq M ) \leq 2 \delta$. This is true for any $\delta$, which concludes the proof.
	\end{proof}

\begin{appendix}

\section*{}

The following is a simple technical lemma that taking the $\sup$ over a compact preserves continuity.
\begin{lemma}	\label{lemma:supc0}
	Let $f: \xdom \times \pkspace \to \R$ be a continuous function over two locally compact metric spaces $(\xdom, \dx)$ and $(\pkspace, \dk)$, and take $\Cx$ a compact subset of $\xdom$. 
	Then the function $g : \theta \mapsto \sup_{x \in \Cx} f(x, \theta)$ is continuous.
\end{lemma}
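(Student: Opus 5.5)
The plan is to prove continuity of $g$ at an arbitrary point $\theta_0 \in \pkspace$ by showing lower and upper semicontinuity separately. The key tool is the uniform continuity of $f$ on a compact set of the form $\Cx \times K$, where $K$ is a compact neighbourhood of $\theta_0$. Such a $K$ exists because $\pkspace$ is locally compact; for instance, take a closed ball $\bar B(\theta_0, r)$ with $r$ small enough that it is compact. Note also that if $\Cx$ is empty the statement is trivial (or $g \equiv -\infty$), so we assume $\Cx \neq \emptyset$. Then the supremum is finite and attained for each $\theta$, since $x \mapsto f(x,\theta)$ is continuous on the compact $\Cx$.

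\textbf{Main step.} Since $\Cx \times K$ is compact in the product metric space (with metric, say, $\dx + \dk$), the restriction of $f$ to $\Cx \times K$ is uniformly continuous by Heine's theorem. Hence for every $\epsilon > 0$ there is $\eta \in (0, r)$ such that for all $x \in \Cx$ and all $\theta \in K$ with $\dk(\theta, \theta_0) < \eta$, we have $|f(x,\theta) - f(x,\theta_0)| < \epsilon$. The crucial point is that $\eta$ does not depend on $x$.

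\textbf{Conclusion.} For such $\theta$ and for every $x \in \Cx$, we have
\[
f(x,\theta) < f(x,\theta_0) + \epsilon \leq g(\theta_0) + \epsilon ,
\]
so $g(\theta) \leq g(\theta_0) + \epsilon$. Symmetrically, $f(x,\theta_0) < f(x,\theta) + \epsilon \leq g(\theta) + \epsilon$ for all $x$, which gives $g(\theta_0) \leq g(\theta) + \epsilon$. Therefore $|g(\theta) - g(\theta_0)| \leq \epsilon$ whenever $\dk(\theta,\theta_0) < \eta$, and $g$ is continuous at $\theta_0$.

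The only delicate point is obtaining uniformity in $x$. Pointwise continuity in $\theta$ for each fixed $x$ is not enough: without compactness in the $\theta$ direction as well, one would have to rely on a tube-lemma argument instead. Local compactness of $\pkspace$ resolves this by providing the compact neighbourhood $K$, which makes Heine's theorem applicable. As an alternative route, the tube lemma applied to the open set $\{(x,\theta) : |f(x,\theta) - f(x,\theta_0)| < \epsilon\}$, which contains $\Cx \times \{\theta_0\}$, yields the same uniform neighbourhood and does not even require local compactness of $\pkspace$.
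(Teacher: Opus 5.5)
Your proof is correct, but it takes a different route from the paper.

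The paper argues sequentially:
\begin{itemize}
\item Lower semicontinuity is immediate, since $g$ is a supremum of the continuous maps $\theta\mapsto f(x,\theta)$.
\item For upper semicontinuity it takes $\theta_k\to\theta$ and passes to a subsequence along which $g(\theta_{T_t})\to\limsup_k g(\theta_k)$.
\item It then picks maximizers $x_{T_t}\in\Cx$ with $f(x_{T_t},\theta_{T_t})=g(\theta_{T_t})$ and extracts $x_{i_t}\to x_\infty$ by sequential compactness of $\Cx$.
\item Joint continuity of $f$ then gives $\limsup_k g(\theta_k)=f(x_\infty,\theta)\le g(\theta)$.
\end{itemize}

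You instead obtain a modulus of continuity that is uniform in $x$, via Heine--Cantor on $\Cx\times K$ or via the tube lemma. This yields the stronger fact that $\sup_{x\in\Cx}|f(x,\theta)-f(x,\theta_0)|\to 0$ as $\theta\to\theta_0$. Continuity of $g$ then follows in one line, with both semicontinuities at once and no maximizers needed. The same bound gives continuity of any functional that is $1$-Lipschitz in sup norm, e.g.\ $\theta\mapsto\inf_{x\in\Cx}f(x,\theta)$.

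Your main argument uses the local compactness of $\pkspace$, which the paper's proof never invokes. Your tube-lemma variant removes that dependence, so both arguments really only need compactness of $\Cx$ and joint continuity of $f$, with metrizability used in the paper to work with sequences. Incidentally, the paper attributes the existence of the maximizers $x_{T_t}$ to completeness of $\Cx$, whereas it comes from compactness; your argument avoids that step altogether.
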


\begin{proof}
	Clearly, the function $g$ is a lower semi-continuous function, as the $\sup$ of a continuous function. Let us prove that $g$ is also upper semi-continuous at $\theta \in \pkspace$: let $(\theta_t)_{t \in \N}$ be a sequence converging to $\theta$. By definition of the limit $\sup$, there exists a sub-sequence $(T_t)_t$ such that $g(\theta_{T_t})$ converges to $\lim \sup_{k\to \infty} g(\theta_k)$. As $\Cx$ is a compact metric space, it is complete and sequentially compact. By completeness of $\Cx$, for each $t \in \N$ there exists $x_{T_t} \in \Cx$ such that $f(x_{T_t}, \theta_{T_t}) = g(\theta_{T_t})$. As $\Cx$ is sequentially compact, a subsequence $(x_{i_t})_t$ of the sequence $(x_{T_t})_t$, converging to some $x_\infty \in \Cx$, can be extracted. By continuity of $f$, $f(x_{i_t}, \theta_{i_t})$ converges to $f(x_\infty, \theta) \leq g(\theta)$. As $f(x_{i_t}, \theta_{i_t})$ also converges to $\lim\sup g(\theta_k)$, we have $\lim\sup g(\theta_k) = f(x_\infty, \theta) \leq g(\theta)$ which shows the upper semi-continuity of $g$.
\end{proof}

\end{appendix}

\bibliography{biblio}

\end{document}